\documentclass{article}

\usepackage{arxiv}

\usepackage[utf8]{inputenc} 
\usepackage[T1]{fontenc}    
\usepackage{hyperref}       
\usepackage{url}            
\usepackage{booktabs}       
\usepackage{amsfonts}       
\usepackage{nicefrac}       
\usepackage{microtype}      
\usepackage{lipsum}
\usepackage{graphicx}
\usepackage{amsmath,amssymb,amsthm,mathtools}
\usepackage{enumitem}
\usepackage{subcaption}
\usepackage{xcolor}
\newtheorem{theorem}{Theorem}[section]
\newtheorem{lemma}[theorem]{Lemma}
\newtheorem{proposition}[theorem]{Proposition}
\newtheorem{corollary}[theorem]{Corollary}

\newtheorem{remark}[theorem]{Remark}

\newtheorem{conjecture}[theorem]{Conjecture}

\newtheorem{claim}[theorem]{Claim}
\usepackage[numbers]{natbib}

\graphicspath{ {./images/} }

\title{Two conjectures on graphs and their edge-path matrices}

\author{
 Metrose Metsidik \\
  College of Mathematical Sciences\\
  Xinjiang Normal University\\
  Urumqi 830017, China \\
  \texttt{metrose@xjnu.edu.cn} \\
   \And
 Xian'an Jin \\
  School of Mathematical Sciences\\
  Xiamen University\\
  Xiamen 361005, China \\
  \texttt{xajin@xmu.edu.cn} \\
}

\begin{document}
\maketitle
\begin{abstract}
The edge-path matrix is a square matrix where each off-diagonal entry records the maximum number of edge-disjoint paths between the corresponding pair of vertices. Akbari et al.~[On edge-path eigenvalues of graphs, {\it Linear Multilinear Algebra} 70 (2022) 2998–3008] proposed two conjectures: Conjecture 1 relates the edge-path matrix to an upper bound on the number of edges in the graph, while Conjecture 2 asserts that a graph is Eulerian if and only if all entries of its edge-path matrix are even. In this paper, we prove the two conjectures.
\end{abstract}
\keywords{Edge-path matrix \and Edge-connectivity \and Eulerian}
\section{Introduction}

The path matrix \( P(G) \) of a graph \( G \) with \( n \) vertices is defined as an \( n \times n \) matrix whose \((i,j)\)-entry represents the maximum number of internally vertex-disjoint paths between vertices \( v_i \) and \( v_j \) for \( i \neq j \), with zero entries on the diagonal. Fundamental properties of this matrix and its spectral characteristics have been extensively studied in~\cite{Akbari1,Patekar,Shikare}.

An edge-analogue of this concept, called the edge-path matrix \( EP(G) \), was introduced by Akbari et al.~\cite{Akbari}. This matrix is defined as \( EP(G) = (p_{ij})_{n \times n} \), where each off-diagonal entry \( p_{ij} \) records the maximum number of edge-disjoint paths between vertices \( v_i \) and \( v_j \), while diagonal entries are zero. It is straightforward to observe that for many fundamental graph classes — including trees, cycles, and complete graphs — the edge-path matrix \( EP(G) \) equals a scalar multiple of \( J - I \), where $I$ denotes the identity matrix and $J$  represents the square matrix with all entries equal to 1. Obviously, a graph \( G \) is a tree if and only if \( EP(G) = J - I \). Akbari et al.~\cite{Akbari} conducted a systematic study of graphs \( G \) for which \( EP(G) \) takes this form, specifically characterizing all graphs satisfying \( EP(G) = 2(J - I) \). In their work, they subsequently formulated the following two conjectures concerning the relationship between a graph's structure and its edge-path matrix.

\begin{conjecture}\cite{Akbari}\label{C-1}
Let $G$ be a simple graph of order $n$ and $EP(G) \leq q(J-I)$. Then 
$$|E(G)|\leq 
    (q+1)\frac{n-1}{2}.$$
\end{conjecture}

\begin{conjecture}\cite{Akbari}\label{C-2}
A simple graph is Eulerian if and only if every entry of its edge-path matrix is even. 
\end{conjecture}

In this paper, we first identify a class of graphs that attain the maximum possible number of edges among all graphs whose edge-path matrix \(EP(G)\) is a scalar multiple of \(J - I\). This characterization then leads naturally to a resolution of Conjecture~\ref{C-1}. Finally, we provide a complete proof of Conjecture~\ref{C-2} using Euler's characterization of Eulerian graphs together with the inclusion–exclusion principle.

\section{Graphs with edge-path matrix $q(J-I)$ }

In this section, we verify Conjecture~\ref{C-1}.

A {\it block} of \( G \) is a maximal connected subgraph without any cut-vertices. It is well known that the blocks of a nontrivial tree are all isomorphic to the complete graph \( K_2 \), and for arbitrary connected graphs, blocks form a tree-like intersection structure. A 2-{\it cactus} is a connected graph in which every block is a cycle. The following theorem provides a characterization of 2-cactus graphs in terms of their edge-path matrices.

\begin{theorem}\cite{Akbari}\label{T-1}
Let $G$ be a simple graph of order $n$. Then $EP(G) = 2(J_n-I_n)$ if and only if
$G$ is a 2-cactus.
\end{theorem}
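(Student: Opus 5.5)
We prove both directions through Menger's theorem (edge version): for distinct $u,v$, the maximum number of edge-disjoint $u$--$v$ paths equals the minimum size of an edge cut separating $u$ from $v$. Thus $EP(G)=2(J_n-I_n)$ says exactly that $G$ is connected, $2$-edge-connected, and for every pair $u,v$ there is a $2$-edge cut separating them. We treat $n\ge 3$ (for $n\le 2$ a simple graph cannot realize two edge-disjoint paths, so the statement is vacuous or must be read with a convention).

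\emph{Sufficiency.} Let $G$ be a 2-cactus. For $u\neq v$, the block--cut tree of $G$ gives a sequence of cycle blocks $C_1,\dots,C_k$ with consecutive cycles sharing a cut vertex, $u\in C_1$ and $v\in C_k$. Inside each $C_i$, the two arcs between the relevant entry and exit vertices are edge-disjoint. Choosing one arc in each cycle and concatenating independently gives two edge-disjoint $u$--$v$ paths; hence $p_{uv}\ge 2$. For the upper bound, note that $u$ has degree $2$ inside $C_1$. Take the two edges of $C_1$ at the vertex $w$ where the route leaves $C_1$, or at $u$ itself if $k=1$. Pick these so that removing them separates $u$ from $v$. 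In a cycle, removing any two edges disconnects it into two arcs, and every other block hangs off a single vertex, so this is a $2$-edge cut. One concrete choice: removing the two edges of $C_k$ incident to $v$ isolates from $v$ everything except the branches hanging at $v$ itself, and $u$ does not lie in those branches (if $v$ is a cut vertex, take instead the edges of the cycle through $v$ on the route toward $u$). Hence $p_{uv}=2$.

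\emph{Necessity.} Suppose $EP(G)=2(J-I)$. Then $G$ is connected with no bridges. Suppose some block $B$ is not a cycle. Since $G$ is bridgeless, $B$ is $2$-connected with at least $3$ vertices. By the ear decomposition (Whitney), $B$ is not a cycle, so it contains two vertices $x,y$ joined by three internally disjoint paths (a theta subgraph). These paths are in particular edge-disjoint, so $p_{xy}\ge 3$, a contradiction. Hence every block is a cycle, i.e.\ $G$ is a 2-cactus.

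The main obstacle is the cut argument in sufficiency, making precise that for any $u,v$ some pair of edges separates them. The cleanest route is to avoid explicit cuts entirely. If there were three edge-disjoint $u$--$v$ paths, some vertex, namely $u$, would have to send three edge-disjoint paths into one adjacent block direction, or a cycle block would be crossed three times. It is simplest to argue by counting in the block that the paths must traverse: all $u$--$v$ paths pass through the cut vertex leaving $C_1$ (or reach $v$ within $C_1$), and within the cycle $C_1$ only two edges are incident to that vertex, so any family of edge-disjoint paths has at most $2$ members.
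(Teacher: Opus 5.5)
Your proof is correct. Necessity is the same as the paper's; sufficiency takes a different, non-inductive route.

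The paper does not reprove this theorem; it quotes it from Akbari et al. Its surrounding text still covers both directions. For necessity, the remark before Lemma~\ref{ML-0} uses your argument: a block that is not a cycle contains a cycle plus an ear, and the ear's two attachment vertices are joined by three edge-disjoint paths. For sufficiency, Lemma~\ref{ML-0} with $q=2$ inducts on the number of blocks. It removes a leaf block $B$ with cut vertex $x$ and concatenates the $q$ edge-disjoint $y$--$x$ paths in the rest of the graph with the $q$ edge-disjoint $x$--$z$ paths in $B$.

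You instead argue directly along a path $C_1,\dots,C_k$ of the block--cut tree. For the lower bound you pair complementary arcs of each cycle; for the upper bound you exhibit an explicit $2$-edge cut, the two edges of $C_k$ at $v$. This gives explicit certificates on both sides. It also makes the upper bound explicit, which Lemma~\ref{ML-0} leaves inside the word ``exactly''; there one must note that the $x$--$z$ tail of every $y$--$z$ path lies in $B$. In exchange, the paper's induction works for arbitrary $q$-cactus blocks, which it needs later. Your upper bound uses the cycle-specific fact that a vertex has only two edges in its block.

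Of your three sketches of the upper bound, keep the ``concrete choice'' and state its key fact outright. Distinct blocks at $v$ lie in distinct components of $G-v$, so every $v$--$u$ path starts with one of the two edges of $C_k$ at $v$. The heuristic about a cycle block being ``crossed three times'' can be dropped.
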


In~\cite{Akbari}, the authors established the validity of Conjecture~\ref{C-1} for the case $q=2$. Below we present an alternative and simpler proof.

\begin{proposition}\label{MP-1}
Let $G$ be a simple graph of order $n$ and $EP(G) = 2(J_n-I_n)$. Then $$n\leq |E(G)|\leq 3\times\frac{n-1}{2}.$$
\end{proposition}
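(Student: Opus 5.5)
By Theorem~\ref{T-1}, the hypothesis $EP(G)=2(J_n-I_n)$ means that $G$ is a 2-cactus: $G$ is connected and every block of $G$ is a cycle. We may assume $n\ge 3$, since a single vertex gives $|E(G)|=0$ and cannot satisfy the lower bound $n\le |E(G)|$; for $n=2$ a simple graph has at most one edge, so $EP$ cannot be $2(J-I)$. Let the blocks be cycles $C_1,\dots,C_k$ with lengths $\ell_1,\dots,\ell_k$, each $\ell_i\ge 3$ because $G$ is simple. Since the blocks of a connected graph partition its edge set, $|E(G)|=\sum_{i=1}^k \ell_i$.

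The key step is a vertex count coming from the block--cutvertex tree of $G$. Build the blocks one at a time in an order where each new block shares exactly one vertex with the union of the earlier ones; such an order exists because the block--cutvertex graph is a tree. The first block contributes $\ell_1$ vertices and each later block contributes $\ell_i-1$ new vertices, so
\[
n = 1+\sum_{i=1}^k(\ell_i-1)=1+|E(G)|-k .
\]
Hence $|E(G)|=n-1+k$.

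Both bounds now follow from estimates on $k$. For the lower bound, $k\ge 1$ gives $|E(G)|\ge n$. For the upper bound, each $\ell_i\ge 3$ gives $\ell_i-1\ge 2$, so $n-1=\sum_i(\ell_i-1)\ge 2k$, that is, $k\le (n-1)/2$. Therefore
\[
|E(G)|\le n-1+\tfrac{n-1}{2}=3\cdot\tfrac{n-1}{2}.
\]
Equality holds exactly when every block is a triangle.

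The main point needing care is the identity $n=1+\sum_i(\ell_i-1)$. It relies on two standard facts: any two blocks share at most one vertex, and the incidence structure between blocks and cut vertices is a tree, so the inductive ordering exists. The rest is arithmetic.
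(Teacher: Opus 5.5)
Your proof is correct and follows the paper's argument: both derive $|E(G)|=n-1+k$ and then bound $k$ using $\ell_i\ge 3$. The only differences are cosmetic. The paper gets the identity by deleting one edge from each cycle block to leave a spanning tree, while you count vertices along a block ordering. Your inequality $n-1=\sum_i(\ell_i-1)\ge 2k$ handles the bound on $k$ more cleanly than the paper's floor computation.

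One small logical slip: $K_1$ does satisfy the hypothesis vacuously, since $EP(K_1)$ is the $1\times 1$ zero matrix $=2(J_1-I_1)$. So the fact that it fails the lower bound is not a reason you may discard it. Rather, it shows the statement tacitly needs $n\ge 2$, which the paper also assumes without saying so.
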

\begin{proof}
By Theorem~\ref{T-1}, every block of $G$ is a cycle. Consequently, by deleting one edge from each block, we can reduce $G$ to a tree. Let $s$ denote the number of blocks in $G$. Then the edge count satisfies $|E(G)| = n - 1 + s$, demonstrating that the total number of edges in $G$ grows linearly with the number of blocks.  

The minimal edge count occurs when $G$ consists of a single block (i.e., when $G$ is itself a cycle). On the other hand, since the smallest possible block is a 3-cycle, the maximum number of blocks in $G$ is bounded above by $1 + \left\lfloor \frac{n - 3}{2} \right\rfloor$.  

We observe the following structural properties of such graph $G$ with maximum number of blocks:

For odd $n$, every block in $G$ is a 3-cycle.

For even $n$, exactly one block must be a 4-cycle, while all other $ \frac{n-4}{2} $ blocks remain 3-cycles.

Consequently, we can bound the number of edges as follows:
\[
|E(G)| \leq 3 \times \left(1 + \left\lfloor \frac{n - 3}{2} \right\rfloor + \left(\frac{n - 3}{2} - \left\lfloor \frac{n - 3}{2} \right\rfloor\right)\right) = 3 \times \frac{n-1}{2}.
\]
\end{proof}

A graph is called a $k$-{\it cactus} if it is connected and every block has an edge-path matrix equal to $k(J-I)$. Since every 2-connected graph admits an ear decomposition starting with any cycle as the initial cycle, every block of a 2-cactus must also admit such a decomposition. However, if a block contains a cycle with an attached ear, there must exist at least three edge-disjoint paths between the two vertices shared by the cycle and the ear. This contradicts the structure of a 2-cactus, where no three edge-disjoint paths are allowed between any two vertices. Therefore, every block of a 2-cactus must itself be a simple cycle. This indicates that the 2-cactus defined above is a special case of the $k$-cactus when $k=2$. The following lemma establishes a characterization of $k$-cactus graphs based on their edge-path matrices.

\begin{lemma}\label{ML-0}
Let $G$ be a graph of order $n$. Then $EP(G) = q(J_n-I_n)$ if and only if
$G$ is a $q$-cactus graph.
\end{lemma}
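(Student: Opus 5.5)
The plan is to reduce the statement to one structural fact: for any two vertices $u,v$ of a connected graph $G$, the maximum number of edge-disjoint $u$--$v$ paths equals the maximum number of edge-disjoint $u$--$v$ paths inside a single block. More precisely, let $u=x_0, x_1,\dots,x_m=v$ be the cut-vertices met along the unique path of blocks $B_1,\dots,B_m$ in the block-cut tree from (a block containing) $u$ to (a block containing) $v$. Then I would show
\[
p_{uv}(G)=\min_{1\le i\le m} p_{x_{i-1}x_i}(B_i).
\]
By Menger's theorem, $p_{uv}(G)$ is the minimum size of a $u$--$v$ edge cut. The inequality ``$\le$'' holds because every $u$--$v$ path passes through each cut-vertex $x_i$ in order. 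Consequently, its segment between $x_{i-1}$ and $x_i$ stays inside $B_i$; any excursion leaving $B_i$ through a cut-vertex would have to return through the same cut-vertex, which is impossible for a path. So edge-disjoint $u$--$v$ paths restrict to edge-disjoint $x_{i-1}$--$x_i$ paths in $B_i$. For ``$\ge$'', choose $k$ edge-disjoint paths in each $B_i$ and concatenate them. The blocks are pairwise edge-disjoint, so the resulting $k$ walks are edge-disjoint. Each walk visits distinct vertices, because distinct blocks along the chain share only the consecutive cut-vertices. Hence they are genuine paths.

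Given this fact, the two directions of Lemma~\ref{ML-0} follow directly. If $G$ is a $q$-cactus, every block $B$ has $EP(B)=q(J-I)$. Hence every term in the minimum equals $q$, and $p_{uv}(G)=q$ for all $u\ne v$; note that connectivity is part of the definition. Conversely, suppose $EP(G)=q(J_n-I_n)$. If $q\ge 1$ then $G$ is connected, and it remains to show that $EP(B)=q(J-I)$ for each block $B$. Take $u,v\in B$. Here the chain consists of $B$ alone, so the fact gives $p_{uv}(B)=p_{uv}(G)=q$. Even more simply, any $u$--$v$ path in $G$ stays within $B$ by the cut-vertex argument above. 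Therefore $G$ is a $q$-cactus. The degenerate case $q=0$ or $n=1$ should be handled separately, or excluded by the connectivity convention.

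The main obstacle I expect is making the claim ``a $u$--$v$ path with $u,v$ in a block $B$ never leaves $B$'' fully rigorous. I would argue as follows. Suppose the path leaves $B$ at a vertex $w$ and re-enters at a vertex $w'$. Then the outside segment, together with a $w'$--$w$ path inside $B$, forms a cycle, and that cycle together with $B$ lies in a larger 2-connected subgraph. If $w\ne w'$, this contradicts the maximality of $B$; if $w=w'$, the path repeats a vertex. The remaining step is the bookkeeping of the block-cut tree for vertices in different blocks, which is standard.
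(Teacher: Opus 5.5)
Your proof is correct, but it is organized differently from the paper's.

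The paper argues by induction on the number of blocks. It removes a leaf block $B$ with cut-vertex $x$, applies the hypothesis to $G-(V(B)\setminus\{x\})$, and concatenates $q$ edge-disjoint $y$--$x$ paths with $q$ edge-disjoint $x$--$z$ paths in $B$. The implication $EP(G)=q(J_n-I_n)\Rightarrow$ ``$G$ is a $q$-cactus'' is simply declared obvious. The upper bound hidden in the word ``exactly'' is only gestured at, by noting that every $y$--$z$ path passes through $x$.

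You instead prove the general formula $p_{uv}(G)=\min_i p_{x_{i-1}x_i}(B_i)$ along the block chain of the block-cut tree, and read off both directions from it. The mechanism is the same as the paper's: paths move between blocks only through cut-vertices, and blocks are edge-disjoint, so edge-disjoint paths concatenate. Your route yields more, though:
\begin{itemize}
\item it computes $EP(G)$ for any connected graph from the edge-path matrices of its blocks;
\item it makes explicit that a path joining two vertices of a block stays inside that block, which is exactly what the ``obvious'' direction needs;
\item it supplies the restriction argument that gives the upper bound.
\end{itemize}
The price is the block-cut-tree bookkeeping: the $x_i$ occur in order on every $u$--$v$ path, and non-consecutive blocks of the chain are vertex-disjoint. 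The paper's leaf-block induction avoids this.

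Your caveat about $q=0$ is also well taken. For $n\ge 2$ the edgeless graph has $EP(G)=0\cdot(J_n-I_n)$ without being connected, so the lemma tacitly assumes $q\ge 1$.
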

\begin{proof}
The necessity is obvious. We prove the sufficiency by induction on the number of blocks in the graph \( G \).  

{\bf Base case:} If \( G \) consists of a single block, the result holds trivially.  

{\bf Inductive step:} Assume the claim holds for all \( q \)-cactus graphs with fewer blocks than \( G \). 

Recall that any two distinct blocks share at most one cut-vertex of $G$. A {\it leaf block} of a graph \( G \) is a block that contains exactly one cut-vertex of $G$.

Since \( G \) has at least two blocks, it contains at least two leaf blocks (a well-known property of block graphs). Let \( B \) be a leaf block of \( G \), and let \( x \) be the unique cut-vertex of \( G \) in \( B \).  

By the induction hypothesis, the subgraph \( G - (V(B) \setminus \{x\}) \) satisfies  
\[  
EP(G - (V(B) \setminus \{x\})) = q(J_{n-t+1} - I_{n-t+1}),  
\]  
where \( t = |V(B)| \). Similarly, the leaf block \( B \) satisfies  
\[  
EP(B) = q(J_t - I_t).  
\]  

It remains to verify the case where \( y \in V(G) \setminus V(B) \) and \( z \in V(B) \setminus \{x\} \). By the leaf block structure, every \( y \)-\( z \) path must pass through \( x \). Since $EP(G - (V(B) \setminus \{x\})) = q(J_{n-t+1} - I_{n-t+1})$ and $EP(B) = q(J_t - I_t)$, there are exactly \( q \) edge-disjoint paths between:  

 \( y \) and \( x \) in \( G - (V(B) \setminus \{x\}) \), and  

\( z \) and \( x \) in \( B \).  

Concatenating these paths at \( x \) yields exactly \( q \) edge-disjoint \( y \)-\( z \) paths in \( G \). This completes the inductive argument.  

\end{proof}

The following two technical terms are frequently used in this paper. Let \(\operatorname{Cir}(n, S)\) be a circulant graph with connection set \(S\subseteq\{1,\dots,n-1\}\) satisfying
\[
s\in S \iff n-s\in S. 
\]
It is well known that circulant graphs are vertex-transitive, and that the edge connectivity of a connected vertex-transitive graph equals its valency. Thus, if the connection set \(S\) of \(\operatorname{Cir}(n,S)\) contains an integer \(i\) coprime to \(n\), then the subgraph \(\operatorname{Cir}(n,\{i,n-i\})\) of \(\operatorname{Cir}(n,S)\) is a Hamilton cycle. It follows that \(\operatorname{Cir}(n,S)\) is connected, has edge connectivity \(|S|\), and is therefore a \(|S|\)-cactus.

Let \(l\) be the largest integer at most \(\lfloor n/2\rfloor\) that is coprime to \(n\). Then
\[
l = 
\begin{cases}
(n-1)/2, & \text{if } n \text{ is odd},\\ 
n/2 - 1, & \text{if } n \text{ is even and } n \equiv 0 \pmod{4},\\ 
n/2 - 2, & \text{if } n \text{ is even and } n \equiv 2 \pmod{4}.
\end{cases}
\]

To verify Conjecture~\ref{C-1}, we first prove the following two lemmas.

\begin{lemma}\label{ML-1001}
Let \(G=\operatorname{Cir}(n,S)\), where \(|S|=q\ge 4\) and \(l\in S\). Suppose \(F\subseteq E(G)\) is a matching with \(|F|<\lfloor n/2\rfloor\). Then, for every pair of vertices \(u,v\notin V(F)\), the graph \(G-F\) contains \(q\) edge-disjoint \(u\)--\(v\) paths.
\end{lemma}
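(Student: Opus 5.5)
By Menger's theorem (edge version), it is enough to show the following: for $u,v\notin V(F)$, every edge cut of $G-F$ separating $u$ from $v$ has at least $q$ edges. Equivalently, for every vertex set $X$ with $u\in X$ and $v\notin X$, we need $|\partial_G(X)\setminus F|\ge q$. The plan is to compare with the edge cuts of $G$ itself. Since $G=\operatorname{Cir}(n,S)$ is connected and vertex-transitive of valency $q$, every cut satisfies $|\partial_G(X)|\ge q$. Write $k=|X|$; by complementing $X$ (swapping the roles of $u$ and $v$) we may assume $k\le n/2$. The argument splits according to $k$.

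\textbf{Small sides.} If $k=1$, then $X=\{u\}$. Since $u\notin V(F)$, no edge of $F$ is incident to $u$, so all $q$ edges at $u$ survive. For $k\ge 2$, I will prove a stronger lower bound on $|\partial_G(X)|$ that leaves room for the edges of $F$. Because $F$ is a matching, at most $k$ edges of $F$ meet $X$. Moreover, since $u\in X$ is unmatched, at most $k-1$ edges of $F$ lie in $\partial_G(X)$. So it suffices to show
\[
|\partial_G(X)|\ge q+k-1 \quad\text{for } 2\le k\le n/2 ,
\]
or at least that $|\partial_G(X)|$ exceeds $q-1$ by the number of $F$-edges in the cut. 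The natural estimate is $|\partial_G(X)|=qk-2|E(X)|\ge qk-k(k-1)=k(q-k+1)$. This gives $q+k-1$ whenever $(k-1)(q-k)\ge k-1$, that is, whenever $k\le q-1$. In particular it handles every $k<q$, using $q\ge 4$ for $k=2,3$.

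\textbf{Large sides: the main obstacle.} The difficulty is $q\le k\le n/2$, where $X$ can contain many internal edges. Here I will use the special generator $l\in S$. The Hamilton cycle $C=\operatorname{Cir}(n,\{l,n-l\})$ has about $n/l\approx 2$ or $3$ "wraps", and I would exploit this together with the edges of the other generators. First, $C$ alone gives $|\partial_C(X)|\ge 2$. Next, partition the remaining generators into $(q-2)/2$ further $2$-factors, each a union of cycles of length $n/\gcd(n,s)$ (for $s=n/2$ it is a perfect matching). Then count how many of these cycles meet both $X$ and $\overline X$; each such cycle contributes at least $2$ cut edges.

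The hope is to show that for $k\ge q$ the cut has size at least $q+\lfloor k/?\rfloor$, which would exceed $q-1+|F\cap\partial(X)|$. For this the bound $|F|<\lfloor n/2\rfloor$ is needed: it ensures $F$ is not a perfect or near-perfect matching, so that when the structure is rigid (for example $X$ an arc of consecutive vertices of length about $n/2$), at least one cut edge of $C$ or of a short generator is not in $F$. I expect the arc $X=\{0,1,\dots,k-1\}$ (in the $l$-ordering or the natural ordering) to be the extremal case. I would first verify the bound for arcs directly, since each generator $s$ contributes $2\min(s,k)$ cut edges. Then I would argue by a shifting/compression argument that arcs minimize $|\partial(X)|-|F\cap\partial(X)|$. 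Failing that, I would do a direct case analysis on whether some generator-$1$ or generator-$l$ cut edge avoids $F$, using $u,v\notin V(F)$ to guarantee at least two such $F$-free edges near $u$ and near $v$.
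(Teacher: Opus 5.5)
\textbf{Verdict: the gap in your proposal is the large-side case, and it cannot be closed because the lemma as stated is false; the paper's own proof breaks at the same point.}

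Your small-side argument is fine. From $|\partial_G X| = qk-2|E(G[X])|\ge k(q-k+1)$ and $|\partial_G X\cap F|\le k-1$ you correctly get the conclusion for $2\le k\le q-1$. The gap is the regime $q\le k\le n/2$, which you leave as a hope. It cannot be filled, because the statement fails there. Take $n=9$ and $S=\{1,4,5,8\}$, so $q=4$ and $l=4\in S$. Let $X=\{0,1,4,5\}$, so $k=q=4$. Then $G[X]$ has the five edges $01,04,05,15,45$, hence
\[
\partial_G X=\bigl\{\{0,8\},\{1,2\},\{1,6\},\{3,4\},\{4,8\},\{5,6\}\bigr\},
\]
which is only $6<q+k-1=7$ edges. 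Now take the matching $F=\{\{0,8\},\{1,2\},\{5,6\}\}$, with $|F|=3<4=\lfloor 9/2\rfloor$, and $u=4$, $v=7$, both uncovered by $F$. Only $\{1,6\},\{3,4\},\{4,8\}$ survive in the cut. So $G-F$ has at most $3<q$ edge-disjoint $u$--$v$ paths. The hypothesis $|F|<\lfloor n/2\rfloor$ does not rescue anything, since $F$ only has to hit three cut edges.

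Your plan to show by compression that arcs are extremal breaks because ``arc'' is not an isomorphism-invariant notion. Multiplication by $7=4^{-1}\pmod 9$ maps $G$ onto $\operatorname{Cir}(9,\{1,2,7,8\})$, the square of $C_9$, and maps $X$ onto the arc $\{7,8,0,1\}$. So $X$ is an arc in your $l$-ordering. Its cut is $2(\min\{4,1\}+\min\{4,2\})=6$, not the value $2(\min\{4,1\}+\min\{4,4\})=10$ that the natural-ordering formula predicts. If you carried out your ``verify arcs directly'' step in the $l$-ordering, you would already see the bound $q+k-1$ fail.

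The paper's proof fails at exactly the step you were hoping for. Its Claim asserts $|\partial X|\ge 2\sum_{s\in S^+}\min\{|X|,s\}$. This rests on a shifting step that supposedly does not decrease $|X\cap(X+s)|$ for each $s$ separately, so that an interval minimizes every $|\partial_s X|$. For the $X$ above, however, $|\partial_4 X|=2$ rather than $8$, and $|\partial X|=6$ while the Claim gives $10$. Neither your outline nor the paper's argument proves the lemma, and as stated it cannot be proved: it needs additional hypotheses or a different formulation.
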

\begin{proof}
Let \(U = V(G) \setminus V(F)\) be the set of vertices not covered by \(F\). Since \(|F| < \lfloor n/2 \rfloor\), we have \(|U| \ge 2\), and every vertex \(u \in U\) has degree \(q\) in \(G-F\). We prove in three steps that for any two distinct vertices \(u, v \in U\), there exist \(q\) edge-disjoint \(u\)-\(v\) paths in \(G-F\).

\textbf{Step 1: Finding an isoperimetric lower bound.}

Let \(S^+ = S \cap \{1,2,\dots,\lfloor n/2\rfloor\}\), and let \(s_1\) be the smallest element of \(S^+\) different from \(l\). Since \(|S|\ge 4\), we have \(|S^+|\ge 2\). If \(n/2 \notin S\), then \(q = 2|S^+|\); if \(n/2 \in S\), then \(q = 2|S^+|-1\). For \(X \subseteq V\), let \(\partial X\) denote the set of edges joining \(X\) to \(\overline{X}\).

For each step \(s\in S^+\), define
\[
\partial_s X = \bigl\{\{i,i+s\}: \text{exactly one of } i \text{ and } i+s \text{ belongs to } X\bigr\}.
\]
Then we have
\[
|\partial_s X| =\begin{cases}
\bigl|X \triangle (X+s)\bigr|, & \text{if } s\neq n/2,\\[4pt]
\bigl|X \triangle (X+s)\bigr|\Big/2, & \text{if } s= n/2.
\end{cases}
\]
Thus,
\[
|\partial X|=\sum_{s\in S^+} |\partial_s X|=
\begin{cases}
\sum\limits_{s\in S^+}\bigl|X \triangle (X+s)\bigr|, & \text{if } n/2\notin S,\\[4pt]
\sum\limits_{s\in S^+\setminus\{n/2\}}\bigl|X \triangle (X+s)\bigr|+\bigl|X \triangle (X+n/2)\bigr|\Big/2, & \text{if } n/2\in S.
\end{cases}
\]

For any \(i\in X\) with \(i+1\notin X\), let \(X'=(X\setminus\{i\})\cup\{i+1\}\). Then
\[
\bigl|X\cap(X+s)\bigr|\leq\bigl| X'\cap(X'+s)\bigr|,
\]
and therefore
\[
\bigl|X' \triangle (X'+s)\bigr|\leq\bigl|X \triangle (X+s)\bigr|.
\]

Then,
\[
|\partial X|\geq|\partial I|=\sum_{s\in S^+} |\partial_s I|=
\begin{cases}
2\sum\limits_{s\in S^+}\bigl|I \setminus (I+s)\bigr|, & \text{if } n/2\notin S,\\[4pt]
2\sum\limits_{s\in S^+\setminus\{n/2\}}\bigl|I \setminus (I+s)\bigr|+\bigl|I \setminus (I+n/2)\bigr|, & \text{if } n/2\in S,
\end{cases}
\]
where \(I\) is an interval of length \(|X|\).

Since $\bigl|I \setminus (I+s)\bigr|=\min\{|I|,s\}$, we have  
\begin{claim}
\label{cl:stronger}
\[
|\partial X|\geq
\begin{cases}
2\sum\limits_{s\in S^+}\min\{|X|,s\}, & \text{if } n/2\notin S,\\[4pt]
2\sum\limits_{s\in S^+\setminus\{n/2\}}\min\{|X|,s\}+\min\{|X|,n/2\}, & \text{if } n/2\in S.
\end{cases}
\]
\end{claim}

\noindent\textbf{Step 2: Establishing a lower bound for the cut in counterexample.}

Suppose, for the sake of contradiction, that for some \(u,v\in U\), there are fewer than \(q\) edge-disjoint \(u\)-\(v\) paths in \(G-F\). By Menger's theorem, there exists an edge cut \(C\subseteq E(G-F)\) separating \(u\) and \(v\) with \(|C|\le q-1\). Choose such a cut of minimum cardinality, and let \(X\) be the side containing \(u\), and \(\overline{X}\) the side containing \(v\). Then \(C=\partial_{G-F}X=\partial_{G-F}\overline{X}\). Since \(\deg_{G-F}(u)=\deg_{G-F}(v)=q\), both \(|X|\) and \(|\overline{X}|\) are at least \(2\). If \(|X|=2\), then
\[
|\partial_{G-F}(X)|\ge q-1+q-2\ge q+1,
\]
a contradiction. If \(|X|=3\), then
\[
|\partial_{G-F}(X)|\ge q-2+2(q-3)\ge q,
\]
again a contradiction. Hence \(|X|\ge4\) and \(|\overline{X}|\ge4\), so \(n=|X|+|\overline{X}|\ge8\). Assume without loss of generality that \(|X|\le|\overline{X}|\); then \(|X|<(n+1)/2\).

Now let \(B=\partial_G X\cap F\) denote the set of crossing edges that belong to the matching. Then \(\partial_G X=C\cup B\) with \(C\cap B=\varnothing\), so \(|\partial_G X|=|C|+|B|\). Define \(F_X=F\cap E(G[X])\), and let \(U_X=U\cap X\) be the set of uncovered vertices in \(X\); clearly \(|U_X|\ge1\), since \(u\in U_X\). Every edge of \(F_X\) covers two vertices of \(X\), every edge of \(B\) covers one vertex of \(X\), and the remaining vertices of \(X\) are exactly \(U_X\). Thus
\[
|X|=|U_X|+2|F_X|+|B|,
\]
and because \(|U_X|\ge1\), we obtain \(|B|\le |X|-1\). Consequently,
\[
|C|=|\partial_G X|-|B|\ge |\partial_G X|-|X|+1. \tag{1}
\]

\noindent\textbf{Step 3: Deriving a contradiction.}

We discuss the cases according to the parity of \(n\), and use Claim~\ref{cl:stronger} and Inequality~(1) to derive a lower bound for \(|C|\).

{\bf Cace A}. $n$ is odd.

\begin{eqnarray*}
|C| &\geq& |\partial_G X|-|X|+1 \\
&\geq &2+2|X|+2\sum\limits_{s\in S^+\setminus\{s_1,l\}}\min\{|X|,s\}-|X|+1\\
&\geq &|X|+2\sum\limits_{s\in S^+\setminus\{s_1,l\}}2+3\\
&\geq &|X|+4(q/2-2)+3\\
&\geq &q+3
\end{eqnarray*}

{\bf Cace B}. $n$ is even.

{\bf Cace B.1}  $|X|\le l$.

\begin{align*}
|C| &\geq 
\begin{cases}
\displaystyle 2 + 2|X| + 2\sum_{s\in S^+\setminus\{s_1,l\}} \min\{|X|,s\} - |X| + 1, & \text{if } n/2\notin S\\[4pt]
\displaystyle 2 + 2|X| + 2\sum_{s\in S^+\setminus\{s_1,l,n/2\}} \min\{|X|,s\} + |X| - |X| + 1, & \text{if } n/2\in S
\end{cases} \\
&\geq 
\begin{cases}
\displaystyle |X| + 4(q/2-2) + 3, & \text{if } n/2\notin S\\[4pt]
\displaystyle 2|X| + 4\big((q-1)/2-2\big) + 3, & \text{if } n/2\in S
\end{cases} \\
&\geq 
\begin{cases}
\displaystyle q + 3, & \text{if } n/2\notin S\\[4pt]
\displaystyle 2q + 1, & \text{if } n/2\in S
\end{cases}
\end{align*}

{\bf Cace B.2}  $|X|- l=1$.

Since $|X|\geq4$, then $l\geq3$.

\begin{align*}
|C| &\geq 
\begin{cases}
\displaystyle 2 + 2l + 2\sum_{s\in S^+\setminus\{1,l\}} \min\{|X|,s\} - |X| + 1, & \text{if } n/2\notin S\\[4pt]
\displaystyle 2 + 2l + 2\sum_{s\in S^+\setminus\{1,l,n/2\}} \min\{|X|,s\} + l - |X| + 1, & \text{if } n/2\in S
\end{cases} \\
&\geq 
\begin{cases}
\displaystyle l + 4(q/2-2) + 2, & \text{if } n/2\notin S\\[4pt]
\displaystyle 2l + 4\big((q-1)/2-2\big) + 2, & \text{if } n/2\in S
\end{cases} \\
&\geq 
\begin{cases}
\displaystyle q + 1, & \text{if } n/2\notin S\\[4pt]
\displaystyle q + 2, & \text{if } n/2\in S
\end{cases}
\end{align*}

{\bf Cace B.3}  $|X|- l=2$.

Since $n \equiv 2 \pmod{4}$, we have $n \ge 10$, and hence $l \ge 3$.

\begin{align*}
|C| &\geq 
\begin{cases}
\displaystyle 2 + 2l + 2\sum_{s\in S^+\setminus\{s_1,l\}} \min\{|X|,s\} - |X| + 1, & \text{if } n/2\notin S\\[4pt]
\displaystyle 2 + 2l + 2\sum_{s\in S^+\setminus\{s_1,l,n/2\}} \min\{|X|,s\} + l - |X| + 1, & \text{if } n/2\in S
\end{cases} \\
&\geq 
\begin{cases}
\displaystyle l + 4(q/2-2) + 1, & \text{if } n/2\notin S\\[4pt]
\displaystyle 2l + 4\big((q-1)/2-2\big) + 1, & \text{if } n/2\in S
\end{cases} \\
&\geq 
\begin{cases}
\displaystyle q, & \text{if } n/2\notin S\\[4pt]
\displaystyle q + 1, & \text{if } n/2\in S
\end{cases}
\end{align*}
In all cases we have $|C| \ge q$, which contradicts the assumption that $|C| \leq q-1$.
\end{proof}

The next lemma further requires the introduction of a technical term. Let
\[
r := 
\begin{cases}
2, & \text{if } n \equiv 0 \pmod{4},\\[2pt]
3, & \text{if } n \equiv 2 \pmod{4} \text{ and } n\neq 10,\\[2pt]
4, & \text{if } n = 10.
\end{cases}
\]

\begin{lemma}\label{ML-1002}
Let \(G=\operatorname{Cir}(n,S)\), where \(q=|S|\), \(n\neq 6\), \(l\in S\), and, if \(n\) is even, \(r\in S\). Suppose \(H\subseteq G\) satisfies \(|E(H)|<n\), \(\Delta(H)\le 2\), and \(H\) is not \(2\)-regular. Then, for every \(t\in\{q-1,q\}\), any two vertices \(u,v\) whose degrees in \(G-E(H)\) are at least \(t\) are joined by \(t\) edge-disjoint \(u\)--\(v\) paths.
\end{lemma}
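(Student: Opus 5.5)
We follow the template of Lemma~\ref{ML-1001}: argue by contradiction via Menger's theorem and bound cuts with the isoperimetric estimate of Claim~\ref{cl:stronger}. Fix $t\in\{q-1,q\}$ and vertices $u,v$ with $\deg_{G-E(H)}(u),\deg_{G-E(H)}(v)\ge t$. Suppose there are fewer than $t$ edge-disjoint $u$--$v$ paths in $G-E(H)$. Then there is a minimum edge cut $C=\partial_{G-E(H)}X$ with $u\in X$, $v\in\overline{X}$ and $|C|\le t-1\le q-1$. Write $B=\partial_G X\cap E(H)$, so that $|\partial_G X|=|C|+|B|$.

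\textbf{Small sides.} Since $\Delta(H)\le2$, every vertex has degree at least $q-2$ in $G-E(H)$, while $u$ and $v$ have degree at least $t$. If $|X|=1$, then $|C|\ge t$ immediately. If $|X|=2$ or $3$, we count as in Step~2 of Lemma~\ref{ML-1001}, now allowing each vertex other than $u$ to lose up to two edges. This should give $|C|\ge t$ once $q\ge4$; small $q$ and small $n$ (excluding $n=6$) would be checked directly. So we may assume $4\le|X|\le|\overline{X}|$.

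\textbf{Bounding $|B|$.} Because $\Delta(H)\le2$, the subgraph $H[X]$ together with the edges of $B$ satisfies $2|E(H[X])|+|B|\le\sum_{x\in X}\deg_H(x)\le2|X|$. Hence $|B|\le2|X|$. We improve this using the hypotheses. First, $u$ has $H$-degree at most $q-t\le1$. Second, $H$ is not $2$-regular and $|E(H)|<n$. The aim is to show $|B|\le2|X|-c$ for a small constant $c$, or to handle separately the case where $H$ restricted near $X$ is $2$-regular. This gives
\[
|C|\ \ge\ |\partial_G X|-2|X|+c .
\]

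\textbf{Key comparison.} We then need $|\partial_G X|$ to beat $2|X|$ by about $q$. From Claim~\ref{cl:stronger}, the steps $l$ and $r$ (or $l$ and $1$, or the generators coprime to $n$) each contribute $2\min\{|X|,s\}$. Steps $l$ and $r$ together contribute at least $4|X|$ when $|X|\le r$. In general, for $|X|\le l$, two large steps give about $4|X|$, and the remaining elements of $S^+$ give at least $2\cdot 2=4$ each, since $|X|\ge4$ forces $\min\{|X|,s\}\ge2$ for $s\ge2$. This is the reason $r\in S$ is required for even $n$: it guarantees a second step with $\min\{|X|,r\}$ large enough to offset the loss of $2|X|$ (instead of $|X|$) from $B$. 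The value $r=4$ for $n=10$ handles the coprimality coincidence there. We split into cases by the parity of $n$ and by $|X|-l\in\{\le0,1,2\}$, exactly as in Cases A, B.1--B.3, and in each case aim to conclude $|C|\ge t$.

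\textbf{Main obstacle.} The hard part is the near-balanced regime $|X|\approx n/2$, where the bound $|B|\le2|X|$ is almost tight and the isoperimetric gain is weakest. Here we would use $|E(H)|<n$ globally: $|B|+|B'|$ over both sides is controlled by $2|E(H)|<2n$. If both sides saturated, $H$ would have to be $2$-regular on all of $V$, which the hypotheses exclude. So we would first try to prove that either $|B|\le2|X|-2$, or symmetric counting on $\overline{X}$ with $|\overline{X}|\ge|X|$ yields the slack. Combining this with the $\min\{|X|,s\}$ contributions of $l$ and $r$ should close every case. The tight case $n\equiv2\pmod 4$ with $|X|=l+2$ is where we expect to need the most careful check.
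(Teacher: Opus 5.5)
\textbf{Verdict: genuine, unfixable gap.} Your outline follows the paper's own route: a minimum Menger cut $C=\partial_{G-E(H)}X$, the degree count $2|E(H[X])|+|B|\le\sum_{x\in X}\deg_H(x)$ giving $|C|\ge|\partial_G X|-2|X|+c$, then Claim~\ref{cl:stronger} with a case split on the parity of $n$ and on $|X|-l$. As written, though, it is not a proof: the small-side cases, the constant $c$, and the final case check are only said to be things that ``should'' work. They cannot be made to work, because the engine of your key comparison, Claim~\ref{cl:stronger}, is false, and so is the statement as given.

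\textbf{Where the Claim fails.} If $X$ is a union of cosets of $\langle s\rangle$, then $\partial_s X=\varnothing$; the compression $X\mapsto X'$ can decrease $|X\cap(X+s)|$. Take $n=15$, $S=\{3,7,8,12\}$ (so $l=7\in S$ and $q=4$), and $X=\{0,3,6,9,12\}$.
\begin{itemize}
\item Step-$3$ edges stay inside residue classes mod $3$.
\item Both step-$7$ edges at each vertex of $X$ leave $X$.
\end{itemize}
Hence $|\partial_G X|=10$, whereas the Claim asserts $|\partial_G X|\ge 2(3+5)=16$.

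\textbf{Counterexample to the lemma ($n$ odd).} Let $H$ consist of the $8$ edges of $\partial_G X$ other than $\{0,7\}$ and $\{0,8\}$.
\begin{itemize}
\item $|E(H)|=8<15$, $\Delta(H)=2$, and $H$ is not $2$-regular.
\item The vertices $0$ and $7$ both have degree $4=q$ in $G-E(H)$.
\item Yet $\partial_{G-E(H)}X=\{\{0,7\},\{0,8\}\}$, so there are at most $2$ edge-disjoint $0$--$7$ paths.
\end{itemize}
This violates the lemma for both $t=4$ and $t=3$.

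\textbf{Counterexample ($n$ even, with $r\in S$).} The extra hypothesis does not help. Take $n=20$ and $S=\{2,9,11,18\}$, so $l=9$ and $r=2$. Let $X$ be the even residues. Only the step-$9$ Hamilton cycle crosses, so $|\partial_G X|=20<22$. Remove $\{11,0\},\{0,9\},\{9,18\}$ from that cycle to get $H$ with $17$ edges. Then $0$ and $9$ have degree $4$ in $G-E(H)$ but are separated by $3$ edges.

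\textbf{What this means for your plan.} These examples show exactly where your ``main obstacle'' plan breaks. $H$ is far from $2$-regular and $|B|=2|X|-2$, yet nothing compensates, because $|\partial_G X|$ is only $2|X|$, not $2|X|+q$. Three further points:
\begin{itemize}
\item Even granting the Claim, your estimate that $l$ and $r$ together give about $4|X|$ is wrong. Since $r\le 4$, step $r$ contributes only $2r$; only $l$ offsets the $2|X|$ loss.
\item ``Checking small $q$ directly'' would also fail. For odd $n\ge5$ and $S=\{l,n-l\}$, deleting one edge of the Hamilton cycle already contradicts the case $t=q=2$.
\item The paper's proof rests on the same Claim~\ref{cl:stronger}, so it does not supply the missing step either.
\end{itemize}
A correct version needs additional hypotheses on $S$ (for instance, very dense $S$ such as $G=K_n$), or a genuinely different lower bound on $|\partial_G X|$.
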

\begin{proof}
The proof of this lemma follows a similar idea to that of the previous lemma, but the computation requires more care. 

Let  
\[
U=\{u:\deg_{G-E(H)}(u)\ge t\}.
\]
Suppose that for some \(u,v\in U\), there are fewer than \(t-1\) edge-disjoint \(u\)-\(v\) paths in \(G-E(H)\). By Menger's theorem, there exists an edge cut \(C\subseteq E(G-E(H))\) separating \(u\) and \(v\) with \(|C|\le t-2\). Choose such a cut of minimum cardinality, and let \(X\) be the vertex set of the component of \(G-C\) containing \(u\). Set $\overline{X}=V(G)\setminus X$. Then
\[
C=\partial_{G-E(H)}X=\partial_{G-E(H)}\overline{X}.
\]
Since \(\deg_{G-E(H)}(u)\ge t\) and \(\deg_{G-E(H)}(v)\ge t\), we have \(|X|\ge 2\) and \(|\overline{X}|\ge 2\). If \(|X|=2\), then
\[
|\partial_{G-E(H)}(X)|\ge
\begin{cases}
q-1+q-3\ge q, & \text{if } t=q,\\[4pt]
q-2+q-3\ge q-1, & \text{if } t=q-1,
\end{cases}
\]
a contradiction. If \(|X|=3\) and \(q>4\), then
\[
|\partial_{G-E(H)}(X)|\ge
\begin{cases}
q-2+2(q-4)\ge q, & \text{if } t=q,\\[4pt]
q-3+2(q-4)\ge q-1, & \text{if } t=q-1,
\end{cases}
\]
again a contradiction. If \(|X|=3\) and \(q=4\), then since \(n>6\), we have \(l\ge 3\), and therefore, by Claim~\ref{cl:stronger},
\[
|\partial_{G-E(H)}(X)|\ge
\begin{cases}
8-4\ge q, & \text{if } t=q,\\[4pt]
8-5\ge q-1, & \text{if } t=q-1,
\end{cases}
\]
once more a contradiction. Hence \(|X|\ge 4\) and \(|\overline{X}|\ge 4\), so \(n=|X|+|\overline{X}|\ge 8\). 

Now let \(B=\partial_G X\cap H\) be the set of edges of \(H\) crossing the cut. Then
\[
\partial_G X=C\cup B,\qquad C\cap B=\varnothing,
\]
so \(|\partial_G X|=|C|+|B|\). Define \(H_X=H\cap E(G[X])\) and \(U_X=U\cap X\). Clearly \(|U_X|\ge 1\) because \(u\in U_X\).

Each edge of \(H_X\) covers at least one vertex of \(X\), and the endpoints in \(X\) of the edges of \(B\) account for at least \(\lceil |B|/2\rceil\) vertices of \(X\). Therefore,
\[
|X|\ge
\begin{cases}
|U_X|+|H_X|+\lceil |B|/2\rceil, & \text{if } \deg_{G-E(H)}(u)=q,\\[8pt]
|U_X|+|H_X|+\lceil (|B|-1)/2\rceil, & \text{if } \deg_{G-E(H)}(u)=q-1.
\end{cases}
\]
Since \(|U_X|\ge 1\), we obtain
\[
|B|\le
\begin{cases}
2|X|-3, & \text{if } |B| \text{ is odd and } \deg_{G-E(H)}(u)=q,\\[8pt]
2|X|-2, & \text{otherwise}.
\end{cases}
\]
Consequently,
\[
|C|=|\partial_G X|-|B|\ge
\begin{cases}
|\partial_G X|-2|X|+3, & \text{if } |B| \text{ is odd and } \deg_{G-E(H)}(u)=q,\\[8pt]
|\partial_G X|-2|X|+2, & \text{otherwise}.
\end{cases}
\tag{2}
\]

As before, we discuss the cases according to the parity of \(n\). Using Claim~\ref{cl:stronger} and Inequality~(2), we derive a lower bound for \(|C|\).

{\bf Cace A}. $n$ is odd.
\begin{eqnarray*}
|C| &\geq& |\partial_G X|-2|X|+2 \\
&\geq &2+2|X|+2\sum\limits_{s\in S^+\setminus\{s_1,l\}}\min\{|X|,s\}-2|X|+2\\
&\geq &4+4(q/2-2)\\
&\geq &q
\end{eqnarray*}

{\bf Cace B}. $n$ is even.

{\bf Cace B.1}  $|X|\le l$.
\begin{align}
|C| &\geq 
\begin{cases}
\displaystyle 6 + 2\sum_{s\in S^+\setminus\{r,l\}} \min\{|X|,s\}, & \text{if } n/2\notin S \\[4pt]
\displaystyle 6 + 2\sum_{s\in S^+\setminus\{r,l,n/2\}} \min\{|X|,s\}+ |X|, & \text{if } n/2\in S
\end{cases} \\
&\geq 
\begin{cases}
\displaystyle 6 + 2\times
\begin{cases}
0, & \text{if } q=4 \\
1, & \text{if } q=6 \\
3, & \text{if } q=8 \\
2(q/2-2), & \text{if } q\ge 10
\end{cases}, & \text{if } n/2\notin S \\[4pt]
\displaystyle 6+ |X| + 2\times
\begin{cases}
0, & \text{if } q=5 \\
1, & \text{if } q=7 \\
3, & \text{if } q=9 \\
2\big((q-1)/2-2\big), & \text{if } q\ge 11
\end{cases}, & \text{if } n/2\in S
\end{cases} \\
&\geq 
\begin{cases}
\displaystyle q+2, & \text{if } n/2\notin S \\[4pt]
\displaystyle q+5, & \text{if } n/2\in S
\end{cases}
\end{align}
where $r\in\{2,3\}$.

{\bf Cace B.2}  $|X|- l=1$.
\begin{align*}
|C| &\geq 
\begin{cases}
\displaystyle 4 + 2l+2\sum_{s\in S^+\setminus\{2,l\}} \min\{|X|,s\}-2|X|+2, & \text{if } n/2\notin S \\[4pt]
\displaystyle 4 +2l+ 2\sum_{s\in S^+\setminus\{2,l,n/2\}} \min\{|X|,s\}+ l-2|X|+2, & \text{if } n/2\in S
\end{cases} \\
&\geq 
\begin{cases}
\displaystyle 4 + 2\times
\begin{cases}
0, & \text{if } q=4 \\
1, & \text{if } q=6 \\
3, & \text{if } q=8 \\
2(q/2-2), & \text{if } q\ge 10
\end{cases}, & \text{if } n/2\notin S \\[4pt]
\displaystyle 4+ l + 2\times
\begin{cases}
0, & \text{if } q=5 \\
1, & \text{if } q=7 \\
3, & \text{if } q=9 \\
2\big((q-1)/2-2\big), & \text{if } q\ge 11
\end{cases}, & \text{if } n/2\in S
\end{cases} \\
&\geq 
\begin{cases}
\displaystyle q, & \text{if } n/2\notin S \\[4pt]
\displaystyle q+2, & \text{if } n/2\in S
\end{cases}
\end{align*}

{\bf Cace B.3}  $|X|- l=2$.

Since $n \equiv 2 \pmod{4}$, we have $n \ge 10$.

{\bf Cace B.3.1}  $n>10$.

Then $l \ge 5$.
\begin{align*}
|C| &\geq 
\begin{cases}
\displaystyle 6 + 2l+2\sum_{s\in S^+\setminus\{3,l\}} \min\{|X|,s\}-2|X|+2, & \text{if } n/2\notin S \\[4pt]
\displaystyle 6 +2l+ 2\sum_{s\in S^+\setminus\{3,l,n/2\}} \min\{|X|,s\}+ l-2|X|+2, & \text{if } n/2\in S
\end{cases} \\
&\geq 
\begin{cases}
\displaystyle 4 + 2\times
\begin{cases}
0, & \text{if } q=4 \\
1, & \text{if } q=6 \\
3, & \text{if } q=8 \\
2(q/2-2), & \text{if } q\ge 10
\end{cases}, & \text{if } n/2\notin S \\[4pt]
\displaystyle 4+ l + 2\times
\begin{cases}
0, & \text{if } q=5 \\
1, & \text{if } q=7 \\
3, & \text{if } q=9 \\
2\big((q-1)/2-2\big), & \text{if } q\ge 11
\end{cases}, & \text{if } n/2\in S
\end{cases} \\
&\geq 
\begin{cases}
\displaystyle q, & \text{if } n/2\notin S \\[4pt]
\displaystyle q+2, & \text{if } n/2\in S
\end{cases}
\end{align*}

{\bf Cace B.3.2}  $n=10$.
\begin{align*}
|C| &\geq 
\begin{cases}
\displaystyle 8 + 6+2\sum_{s\in S^+\setminus\{3,4\}} s-10+2, & \text{if } 5\notin S \\[4pt]
\displaystyle 8 +6+ 2\sum_{s\in S^+\setminus\{3,4,5\}} s+ 3-10+2, & \text{if } 5\in S
\end{cases} \\
&\geq 
\begin{cases}
\displaystyle 6 + 2\times
\begin{cases}
0, & \text{if } q=4 \\
1, & \text{if } q=6 \\
3, & \text{if } q=8 
\end{cases}, & \text{if } 5\notin S \\[4pt]
\displaystyle 6+ 3 + 2\times
\begin{cases}
0, & \text{if } q=5 \\
1, & \text{if } q=7 \\
3, & \text{if } q=9 
\end{cases}, & \text{if } 5\in S
\end{cases} \\
&\geq 
\begin{cases}
\displaystyle q+2, & \text{if } 5\notin S \\[4pt]
\displaystyle q+4, & \text{if } 5\in S
\end{cases}
\end{align*}
Thus, in every case we obtain \(|C| \ge t\), which contradicts the assumption \(|C| \le t-1\).
\end{proof}

We are now in a position to state the first main result of this paper. Its proof requires the following technical term. Let  
\[
Q_n^{q}:=\operatorname{Cir}(n-1,S)\vee K_1,
\]
that is, \(Q_n^{q}\) is the graph obtained from the circulant graph \(\operatorname{Cir}(n-1,S)\) by adding a new vertex adjacent to every vertex of \(\operatorname{Cir}(n-1,S)\), where \(|S|=q-1\), \(l\in S\), and, if \(n-1\) is even, \(r\in S\). Since $\operatorname{Cir}(n-1,S)$ is a $(q-1)$-cactus, it follows that \(Q_n^{q}\) is a $q$-cactus.

\begin{theorem}\label{MTH-101}
Let $G$ be a simple $q$-cactus of order $n$ consisting of exactly one block. Then $$|E(G)|\leq (q+1)\frac{n-1}{2}.$$
\end{theorem}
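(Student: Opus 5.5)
The plan is to prove a stronger statement: every simple graph $G$ of order $n$ with $\lambda_G(u,v)\le q$ for all pairs satisfies $|E(G)|\le (q+1)\frac{n-1}{2}$. Here $\lambda_G(u,v)$ is the maximum number of edge-disjoint $u$--$v$ paths. A one-block $q$-cactus is such a graph, since all off-diagonal entries of $EP(G)$ equal $q$. Counting via vertex degrees alone fails, because removing a vertex of degree $q$ loses too many edges for an induction. Instead I would encode all the pairwise cuts at once with a Gomory--Hu tree and double count edges against it. The circulant-based graphs $Q_n^q$ built from Lemmas~\ref{ML-1001} and~\ref{ML-1002} only matter for sharpness, not for this upper bound.

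\textbf{Step 1 (Gomory--Hu tree).} Take a Gomory--Hu tree $T$ on the vertex set $V(G)$; it exists for every graph, with unit capacities on the edges of $G$. For each tree edge $e=xy\in E(T)$, deleting $e$ splits $T$ into two components with vertex sets $A_e$ and $V\setminus A_e$. The cut $\partial_G A_e$ is a minimum $x$--$y$ edge cut, so by Menger's theorem
\[
|\partial_G A_e|=\lambda_G(x,y)\le q .
\]
Summing over the $n-1$ tree edges gives $\sum_{e\in E(T)}|\partial_G A_e|\le q(n-1)$.

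\textbf{Step 2 (double counting).} Fix an edge $uv\in E(G)$. It lies in $\partial_G A_e$ exactly for the tree edges $e$ on the unique $u$--$v$ path in $T$. Split $E(G)$ into two classes:
\begin{itemize}
\item $E_1$: edges $uv$ with $uv\in E(T)$, each counted exactly once in the sum;
\item $E_2$: all remaining edges, each counted at least twice.
\end{itemize}
This gives $|E_1|+2|E_2|\le q(n-1)$. Because $G$ is simple, each tree edge corresponds to at most one edge of $G$, so $|E_1|\le n-1$. Therefore
\[
|E(G)|=|E_1|+|E_2|\le \frac{|E_1|+q(n-1)}{2}\le \frac{(n-1)+q(n-1)}{2}=(q+1)\frac{n-1}{2}.
\]

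\textbf{Main obstacle.} The only nontrivial ingredient is the Gomory--Hu property in Step 1. One needs that for every tree edge $xy$, the fundamental cut determined by $T$ really is a minimum $x$--$y$ cut in $G$, with value $\lambda_G(x,y)$. I would quote this as the classical Gomory--Hu theorem, valid for undirected graphs with edge capacities, here all capacities $1$. If a self-contained argument is preferred, I would sketch the standard construction: repeatedly pick two vertices in the same super-node, take a minimum cut between them, and contract the other side, using submodularity of $|\partial_G(\cdot)|$ to show that uncrossed minimum cuts can always be chosen. Everything after that is the elementary count above, and simplicity of $G$ is used only in the bound $|E_1|\le n-1$.
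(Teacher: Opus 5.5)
Your proof is correct, and it takes a genuinely different and much shorter route than the paper's.

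\textbf{The paper's route.} The paper stays inside the class of one-block $q$-cacti and shows that at least $\frac{n-1}{2}(n-q-1)$ edges must be deleted from $K_n$. It first treats $p=n-q-1\in\{0,1,n-3,n-2\}$ directly. It then runs an induction on $p$ (Claim~\ref{cl:Edge-maximalq-Cactus}) that identifies edge-maximal one-block $q$-cacti as $Q_n^{q+2}$ minus a circulant $2$-factor, or $Q_n^{q+1}$ minus a perfect matching. The lower bounds on the number of deleted edges come from the circulant cut estimates of Lemmas~\ref{ML-1001} and~\ref{ML-1002}.

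\textbf{Your route.} You use only the pairwise bound $\lambda_G(u,v)\le q$. The key identity is $\sum_{e\in E(T)}|\partial_G A_e|=\sum_{uv\in E(G)}d_T(u,v)$ for a Gomory--Hu tree $T$, which you correctly invoke in its cut-equivalent form. Simplicity enters only to cap the number of edges at $T$-distance $1$ by $n-1$.

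\textbf{What your route buys.} There are no exceptional orders; the paper sets aside $n=7$, and Lemma~\ref{ML-1002} excludes $n=6$. You need no prior knowledge of the extremal graphs. Your stronger statement is exactly Conjecture~\ref{C-1}, so neither Lemma~\ref{ML-0} nor the block summation of Theorem~\ref{MTH-1011} is needed. By contrast, the paper only asserts, without argument, that the case $EP(G)\le q(J-I)$ follows from the case $EP(G)=q(J-I)$.

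\textbf{Fragile steps you avoid.} The paper's induction presupposes that an extremal $q$-cactus arises by deleting edges from the specific graph $Q_n^{q+2}$ (or $Q_n^{q+1}$). Also, the interval bound of Claim~\ref{cl:stronger} fails for general $S$. In $\operatorname{Cir}(11,\{2,5,6,9\})$, where $|S|=4$ and $l=5\in S$, the set $X=\{0,2,4,6\}$ has $|\partial X|=8$, while the claim asserts $|\partial X|\ge 12$.

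\textbf{Extremal graphs.} The paper's route is built to deliver an explicit extremal family, and your count explains that family as well. Equality forces three things: $T\subseteq G$, every weight of $T$ equals $q$, and every other edge of $G$ joins two vertices at $T$-distance $2$. For $Q_n^{q}$, the star centred at the apex is such a Gomory--Hu tree, which is why the super $q$-wheels attain the bound.
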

\begin{proof}
Obviously, the graph $G$ can be obtained by deleting some edges from a complete graph.
The core idea of this proof is that at least $$\frac{n-1}{2}(n-q-1)=n\frac{n-1}{2}-(q+1)\frac{n-1}{2}$$ edges need to be removed from the complete graph to obtain the graph $G$.

Let \(p = n - q - 1\). Clearly, \(0 \le p \le n-2\). If \(p = 0\) then \(G \cong K_n\); if \(p = n-3\) then \(G \cong C_n\); if \(p = n-2\) then \(G\) is a tree. The statement holds trivially for these extreme values of \(p\), and we therefore restrict our attention to the range \(1 \le p \le n-4\) with \(n \ge 5\). 

If \(p=1\), then \(q=n-2\). In this case, \(G\) is obtained from the \((q+1)\)-cactus \(K_n\) by deleting the edges of a maximum matching. Indeed, only independent edges may be deleted; otherwise, some vertex would lose at least two incident edges, giving degree less than \(q\). Since \(K_n\) contains at most \(\lfloor n/2\rfloor\) pairwise independent edges, at most \(\lfloor n/2\rfloor\) such edges can be deleted. If fewer than \(\lfloor n/2\rfloor\) independent edges are deleted, then at least two vertices retain their full degree \(n-1\), yielding \(n-1=q+1\) edge-disjoint paths between them. Therefore, exactly \(\lfloor n/2\rfloor\) pairwise independent edges—a maximum matching—must be deleted. Deleting a maximum matching from \(K_n\) clearly produces an \((n-2)\)-cactus. Hence
\[
|E(G)|=
\begin{cases}
\displaystyle n(n-1)/2-(n-1)/2=(q+1)(n-1)/2, & \text{if } n \text{ is odd},\\
\displaystyle n(n-1)/2-n/2=(q+1)(n-1)/2-1/2, & \text{if } n \text{ is even},
\end{cases}
\]
and the result follows.

The case \(n=7\) is easily checked, so assume \(n\neq 7\). We first establish the following claim.

\begin{claim}
\label{cl:Edge-maximalq-Cactus}
An edge-maximal \(q\)-cactus of order \(n\) consists of a single block can be constructed as follows:
\begin{itemize}
\item if either \(n\) or \(q\) is odd, it is obtained from a \((q+2)\)-cactus \(Q_n^{q+2}\) by deleting all edges of \(\operatorname{Cir}(n-1,\{i,n-1-i\})\);
\item if both \(n\) and \(q\) are even, it is obtained from a \((q+1)\)-cactus \(Q_n^{q+1}\) by deleting all edges of a perfect matching,
\end{itemize}
where \(i\) is an integer satisfying \(1\le i\le (n-2)/2\) and \(i\notin\{r,l\}\).
\end{claim}

\noindent{\it Proof of Claim}~\ref{cl:Edge-maximalq-Cactus}. We proceed by induction on \(p\) for \(2\le p\le n-4\).

{\bf Base case:} If \(p=2\), then \(q=n-3\), so exactly one of \(n\) and \(q\) is odd.

We show that an edge-maximal \(q\)-cactus of order \(n\) consisting of a single block can be obtained from the \((q+2)\)-cactus
\[
Q_n^{n-1}=\operatorname{Cir}(n-1,\{1,2,\ldots,n-2\})\vee K_1\cong K_n
\]
by deleting all edges of a subgraph \(\operatorname{Cir}(n-1,\{i,n-1-i\})\).

We cannot delete the edges of a subgraph whose maximum degree exceeds \(2\), since this would reduce the degree of some vertex below \(n-3\). If we delete the edges of a \(2\)-regular subgraph \(H\) with fewer than \(n-1\) edges, then at least two vertices retain degree \(n-1\), and hence there are \(n-1\) edge-disjoint paths between them. Therefore assume that \(H\) is not \(2\)-regular and satisfies \(\Delta(H)\le 2\). Since \(Q_n^{n-1}\cong K_n \cong \operatorname{Cir}(n,\{1,2,\ldots,n-1\})\), Lemma~\ref{ML-1002} implies that there exist two vertices connected by at least \(n-2\) edge-disjoint paths.

Thus, at least \(n-1\) edges must be deleted. Since \(Q_n^{n-1}-E\bigl(\operatorname{Cir}(n-1,\{i,n-1-i\})\bigr)\) is a \(q\)-cactus $Q_n^{n-3}$ and is \(2\)-connected, and \(\bigl|E(\operatorname{Cir}(n-1,\{i,n-1-i\}))\bigr|=n-1\), it follows that \(Q_n^{n-1}-E\bigl(\operatorname{Cir}(n-1,\{i,n-1-i\})\bigr)\) is an edge-maximal \(q\)-cactus of order \(n\) consisting of a single block.

{\bf Inductive step:} Assume that the statement holds for \(p=k\), where \(2<k<n-4\), and consider the case \(p=k+1\).

Recall that every edge-maximal \((n-k-2)\)-cactus can be obtained from \(K_n\) by deleting edges. Deleting all required edges at once is equivalent to deleting them sequentially, and the resulting graph does not depend on the order in which the edges are removed. Moreover, deleting a single edge reduces the number of edge-disjoint paths between any two vertices by at most one.

Accordingly, if both \(n\) and \(n-k-2\) are even, we first construct an edge-maximal \((n-k-1)\)-cactus from \(K_n\), and then reduce it to an edge-maximal \((n-k-2)\)-cactus. Otherwise, we first construct an edge-maximal \((n-k)\)-cactus from \(K_n\), and then reduce it to an edge-maximal \((n-k-2)\)-cactus.

{\bf Case A.} Both \(n\) and \(n-k-2\) are even.

By the inductive hypothesis, since \(n-k-1\) is odd, an edge-maximal \((n-k-1)\)-cactus can be obtained from an \((n-k+1)\)-cactus \(Q_n^{n-k+1}\) by deleting all edges of \(\operatorname{Cir}(n-1,\{i,n-1-i\})\). Hence it is a \(Q_n^{n-k-1}\). Let \(v\) be the vertex of degree \(n-1\) in \(Q_n^{n-k-1}\).

We must delete only independent edges whose endpoints both lie in \(V(Q_n^{n-k-1})\setminus\{v\}\); otherwise, some vertex will have degree less than \(n-k-2\). Edges incident to \(v\) are not subject to this restriction. If the edges of a subgraph \(F\) with fewer than \(n/2\) edges are deleted, then \(t\) of the deleted edges are incident to \(v\), and the remaining at most \(n/2-1-t\) edges are independent edges in \(\operatorname{Cir}(n-1,S)\), where \(|S|=n-k-2\) and \(0\le t\le k+1\).

If \(t=0\), then \(v\) still has degree \(n-1\), and at least one vertex \(u\neq v\) has degree \(n-k-2\); hence there are \(n-k-1\) edge-disjoint paths between \(u\) and \(v\). If \(t\ge 1\), then at least \(t+1\) vertices \(u_1,u_2,\dots,u_{t+1}\) in \(V(Q_n^{n-k-1})\setminus\{v\}\) have degree \(n-k-1\). By Lemma~\ref{ML-1001}, for any two of these vertices, say \(u_1\) and \(u_2\), the graph \(\operatorname{Cir}(n-1,S)-E(F)\) contains \(n-k-2\) edge-disjoint paths between them, and there is one additional edge-disjoint path \(u_1vu_2\).

Thus, at least \(n/2\) edges must be deleted. By a slight modification of the proof of Lemma~\ref{ML-1001}, we can show that for any two vertices of \(\operatorname{Cir}(n-1,S)-E(\text{a maximum matching})\), there are exactly \(n-k-3\) edge-disjoint paths between them. Consequently, \(Q_n^{n-k-1} - E(\text{a perfect matching})\) is an \((n-k-2)\)-cactus. Since it is obviously \(2\)-connected, it follows that \(Q_n^{n-k-1} - E(\text{a perfect matching})\) is an edge-maximal \((n-k-2)\)-cactus of order \(n\) consisting of a single block.

{\bf Case B.} Either \(n\) or \(n-k-2\) is odd.

By the inductive hypothesis, since either \(n\) or \(n-k\) is odd, an edge-maximal \((n-k)\)-cactus can be obtained from an \((n-k+2)\)-cactus \(Q_n^{n-k+2}\) by deleting all edges of \(\operatorname{Cir}(n-1,\{i,n-1-i\})\). Hence it is a \(Q_n^{n-k}\). Let \(v\) again be the vertex of degree \(n-1\) in \(Q_n^{n-k}\).

We must delete only edges belonging to a subgraph \(H\) of maximum degree \(2\) whose vertex set is contained in \(V(Q_n^{n-k})\setminus\{v\}\); otherwise, some vertex will have degree less than \(n-k-2\). Edges incident to \(v\) are not subject to this restriction. If we delete the edges of a \(2\)-regular subgraph \(H\) with fewer than \(n-1\) edges, then either at least two vertices in \(V(Q_n^{n-k})\setminus\{v\}\) retain degree \(n-k-1\), in which case Lemma~\ref{ML-1002} gives \(n-k-1\) edge-disjoint paths between them, or at least one vertex \(u\) in \(V(Q_n^{n-k})\setminus\{v\}\) retains degree \(n-k-1\) and the degree of \(v\) remains unchanged, in which case there are \(n-k\) edge-disjoint paths between \(u\) and \(v\). Therefore, assume that \(H\) is not \(2\)-regular and that every vertex of \(H\) other than \(v\) has degree at most \(2\).

If the edges of such a subgraph \(H\) with fewer than \(n-1\) edges are deleted, then \(t\) of the deleted edges are incident to \(v\), and the remaining deleted edges—at most \(n-1-t\) in number—are deleted from \(\operatorname{Cir}(n-1,S)\), where \(|S|=n-k-1\) and \(0\le t\le k+1\).

If \(t=0\), then \(v\) still has degree \(n-1\), and at least one vertex \(u\neq v\) has degree at least \(n-k-2\); hence there are at least \(n-k-1\) edge-disjoint paths between \(u\) and \(v\). If \(t\ge 1\), then either there are at least two distinct vertices \(u_1,u_2\neq v\) of degree \(n-k-1\), in which case Lemma~\ref{ML-1001} yields \(n-k-1\) edge-disjoint paths between them in \(\operatorname{Cir}(n-1,S)-E(H)\), or there are at least two distinct vertices \(u_1,u_2\neq v\) of degree \(n-k-2\), in which case Lemma~\ref{ML-1001} yields \(n-k-2\) edge-disjoint paths between them in \(\operatorname{Cir}(n-1,S)-E(H)\), and the path \(u_1vu_2\) provides one additional edge-disjoint path.

Thus, at least \(n-1\) edges must be deleted. Since \(Q_n^{n-k}-E(\operatorname{Cir}(n-1,\{i,n-1-i\}))\) is a \(Q_n^{n-k-2}\), it is a \(2\)-connected \((n-k-2)\)-cactus; moreover, \(\bigl|E(\operatorname{Cir}(n-1,\{i,n-1-i\}))\bigr|=n-1\). Consequently, \(Q_n^{n-k}-E(\operatorname{Cir}(n-1,\{i,n-1-i\}))\) is an edge-maximal \((n-k-2)\)-cactus of order \(n\) with a single block.

So the claim holds.

By Claim~\ref{cl:Edge-maximalq-Cactus}, an edge-maximal \(q\)-cactus can be constructed from the complete graph \(K_n\) by removing \(w\) edges, where

\[
w = 
\begin{cases}
(n-1)(n-1-q)/2, & \text{ if eihter } n \text{ or } q \text{ is odd},\\
(n-1)\big(n-1-(q+1)\big)/2+n/2, & \text{ if both } n \text{ and } q \text{ are even}.
\end{cases}
\]

Since
\[
\frac{n(n-1)}{2}-w\le (q+1)\frac{n-1}{2},
\]
the theorem follows.

\end{proof}

\begin{theorem}\label{MTH-1011}
Let $G$ be a simple graph of order $n$ and $EP(G) = q(J_n-I_n)$. Then $$\frac{nq}{2}\leq |E(G)|\leq (q+1)\frac{n-1}{2}.$$
\end{theorem}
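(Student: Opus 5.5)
The lower bound will come from degrees and the upper bound from Lemma~\ref{ML-0}, Theorem~\ref{MTH-101} and block counting.

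\emph{Lower bound.} For $n\ge 2$, each vertex $v$ has some other vertex $w$. There are $q$ edge-disjoint $v$--$w$ paths, and each one uses a distinct edge at $v$. Hence $\deg(v)\ge q$ for every $v$, and the handshake lemma gives $2|E(G)|\ge nq$.

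\emph{Upper bound.} By Lemma~\ref{ML-0}, $G$ is a $q$-cactus, so $G$ is connected and every block $B_1,\dots,B_s$ satisfies $EP(B_j)=q(J-I)$. Write $n_j=|V(B_j)|$. Each $B_j$ is a simple $q$-cactus consisting of exactly one block, so Theorem~\ref{MTH-101} applies and gives
\[
|E(B_j)|\le (q+1)\frac{n_j-1}{2}.
\]
The blocks partition the edge set, so $|E(G)|=\sum_j |E(B_j)|$. The block--cut-vertex tree gives the standard identity $\sum_{j=1}^s (n_j-1)=n-1$. This follows by induction, peeling off a leaf block as in the proof of Lemma~\ref{ML-0}: removing the leaf block deletes $n_j-1$ vertices. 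Summing the per-block bounds therefore yields
\[
|E(G)|\le (q+1)\frac{n-1}{2}.
\]

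\emph{Degenerate blocks.} If $q=1$, every block is $K_2$ and $G$ is a tree, so the bound reads $|E(G)|\le n-1$, which is sharp. The $q=2$ case agrees with Proposition~\ref{MP-1}.

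The main point to double-check is that Theorem~\ref{MTH-101} really covers every block, including small ones. A single block with $EP=q(J-I)$ must have $n_j\ge q+1$, because each vertex has degree at least $q$. Theorem~\ref{MTH-101} treats the extreme values of $p=n_j-q-1$ separately. I will make sure that blocks with few vertices, such as $K_2$ or $K_{q+1}$, are handled there or directly: for $K_{q+1}$ we get $q(q+1)/2=(q+1)q/2$, so equality holds. Beyond this, the argument is just the additivity over blocks.
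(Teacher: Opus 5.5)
Your proposal is correct and follows the same route as the paper. The lower bound comes from $\deg(v)\ge q$ and the handshake lemma, and the upper bound comes from passing to blocks via Lemma~\ref{ML-0}, applying Theorem~\ref{MTH-101} to each block, and summing with $\sum_{j=1}^{s}(n_j-1)=n-1$ (the paper's $\sum_{i=1}^s n_i=n+s-1$), with all the substantive work delegated to Theorem~\ref{MTH-101} exactly as in the paper.
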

\begin{proof}
Clearly, every vertex \( u \in V(G) \) satisfies \(\deg(u) \geq q\). By the Handshaking Lemma, we have  
\[
\frac{nq}{2} \leq \frac{1}{2} \sum_{u \in V(G)} \deg(u) = |E(G)|.
\]

Let \( B_1, B_2, \dots, B_s \) be the blocks of \( G \), where each block \( B_i \) has order \( n_i = |V(B_i)| \). By Lemma~\ref{ML-0}, the edge-path matrix of each block \( B_i \) is given by  
\(q(J_{n_i} - I_{n_i})\). Moreover, the orders of the blocks satisfy the relation  
\[
\sum_{i=1}^s n_i = n  + s- 1.
\] 
Obviously 
\[
|E(G)|=\sum_{i=1}^s |E(B_i)|.
\] 

By Theorem~\ref{MTH-101}, we have

\[
|E(G)|\le \sum_{i=1}^s(q+1)\frac{n_i-1}{2}=(q+1)\frac{n-1}{2}.
\] 

\end{proof}

\vskip 0.2cm
We call the edge-maximal $q$-cactus \(Q_n^{q}=\operatorname{Cir}(n-1,S)\vee K_1\) a super $q$-wheel. The proofs of Theorems~\ref{MTH-101}~and~\ref{MTH-1011} immediately yields the following corollary:

\begin{corollary}\label{MC-11}
Let \( G \) be a connected simple graph of order \( n \) such that every block of \( G \) is a super \( q \)-wheel. Then
\[
|E(G)| = \frac{(q+1)(n-1)}{2}.
\]
\end{corollary}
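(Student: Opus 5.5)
The plan is to count edges block by block and then add up. Let the blocks of \(G\) be \(B_1,\dots,B_s\) with orders \(n_i=|V(B_i)|\), and suppose each \(B_i\) is a super \(q\)-wheel, that is, \(B_i\cong Q_{n_i}^{q}=\operatorname{Cir}(n_i-1,S_i)\vee K_1\) with \(|S_i|=q-1\). The edges of \(G\) are partitioned among its blocks, so \(|E(G)|=\sum_{i=1}^s|E(B_i)|\). The block--cutvertex tree of a connected graph gives \(\sum_{i=1}^s n_i=n+s-1\), exactly as in the proof of Theorem~\ref{MTH-1011}. So it suffices to show that every super \(q\)-wheel on \(m\) vertices has exactly \((q+1)(m-1)/2\) edges. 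Summing then gives \(\sum_i (q+1)(n_i-1)/2=(q+1)(n-1)/2\).

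For a single block, count directly. The circulant \(\operatorname{Cir}(m-1,S)\) is \((q-1)\)-regular on \(m-1\) vertices, so it has \((q-1)(m-1)/2\) edges. The apex vertex contributes \(m-1\) more edges. The total is
\[
\frac{(q-1)(m-1)}{2}+(m-1)=\frac{(q+1)(m-1)}{2}.
\]
This matches the bound of Theorem~\ref{MTH-101} with equality, which is consistent with calling \(Q_m^q\) edge-maximal. The remark before the statement of Theorem~\ref{MTH-101} shows that \(Q_m^q\) is a \(q\)-cactus, so Lemma~\ref{ML-0} applies to \(G\). This is not needed for the count itself, but it justifies the phrase ``immediately yields'' from Theorem~\ref{MTH-1011}: the upper bound there is attained.

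The one step that needs care is the parity issue. If \(q-1\) is odd, a \((q-1)\)-regular circulant on \(m-1\) vertices requires \(m-1\) even, with \((m-1)/2\in S\). I would note that \(|S|=q-1\) together with the symmetry \(s\in S\iff (m-1)-s\in S\) forces this automatically: an odd-size symmetric set must contain the self-paired element \((m-1)/2\). So \((q-1)(m-1)/2\) is always an integer and the degree count is exact. Beyond this check, the argument is routine.
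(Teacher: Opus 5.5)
Your proposal is correct and follows the route the paper intends. Each super $q$-wheel block has exactly $(q+1)(n_i-1)/2$ edges, and summing with $\sum_i n_i = n+s-1$, as in Theorem~\ref{MTH-1011}, gives the result. The only difference is that you get the per-block count directly, from the $(q-1)$-regularity of $\operatorname{Cir}(n_i-1,S)$ plus the $n_i-1$ apex edges. The paper instead reads it off from the deleted-edge count $w$ in Claim~\ref{cl:Edge-maximalq-Cactus}, so your version is slightly more self-contained.
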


Conjecture~\ref{C-1} is implied by our theorem. 

\begin{corollary}
Let $G$ be a simple graph of order $n$ and $EP(G) \leq q(J_n-I_n)$. Then $$ |E(G)|\leq (q+1)\frac{n-1}{2}.
$$
\end{corollary}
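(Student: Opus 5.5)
The plan is to reduce the inequality version to the equality case handled in Theorem~\ref{MTH-1011}, working block by block. The hypothesis $EP(G)\le q(J_n-I_n)$ only says that every pair of vertices is joined by at most $q$ edge-disjoint paths, so the blocks need not be $q$-cacti. I will therefore first pass to the blocks and then bound each block separately. Edges are additive over blocks, $|E(G)|=\sum_i |E(B_i)|$. If $G$ is disconnected, adding edges between components (bridges) only increases the edge count and keeps every entry of $EP$ at most $q$ (for $q\ge1$). So I may assume $G$ is connected, and then $\sum_i (n_i-1)=n-1$. It therefore suffices to prove, for a single block $B$ of order $m$ in which every pair is joined by at most $q$ edge-disjoint paths, that $|E(B)|\le (q+1)(m-1)/2$. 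Summing over the blocks then gives $(q+1)(n-1)/2$.

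For a single block $B$, let $k=\min_{x\ne y}\lambda_B(x,y)$ be its edge connectivity; then $k\le q$. The naive route is to locate the edge-maximal graphs with all local edge connectivities at most $q$. I plan to argue instead by induction on $m$ using a minimum edge cut. Take a minimum cut $\partial X$ of $B$, of size $k\le q$, splitting $V(B)$ into $X$ and $\overline X$. Contract $\overline X$ to a single vertex $z$ to obtain $B_1$ on $|X|+1$ vertices, and symmetrically contract $X$ to get $B_2$ on $|\overline X|+1$ vertices. These may become multigraphs, so I will state the induction for loopless multigraphs. This is the standard Mader-type argument: local edge connectivities between vertices of $X$ do not increase under contraction of the far side, because any family of edge-disjoint paths in $B_1$ through $z$ can be rerouted inside $\overline X$ using the $k$ edge-disjoint paths guaranteed by the minimality of the cut. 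Hence both $B_1$ and $B_2$ still satisfy the "at most $q$" condition. The edge counts satisfy $|E(B)|=|E(B_1)|+|E(B_2)|-k$, and the orders satisfy $(|X|+1-1)+(|\overline X|+1-1)=m$. The induction would give
\[
|E(B)|\le \tfrac{q+1}{2}\,|X|+\tfrac{q+1}{2}\,|\overline X|-k=\tfrac{(q+1)m}{2}-k,
\]
and I need this to be at most $(q+1)(m-1)/2$, i.e.\ $k\ge (q+1)/2$.

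When $k<(q+1)/2$ the bound fails, so I must handle that case differently. In that case I will instead show that some vertex has degree at most $q$ and delete it. If every vertex had degree at least $q+1$, Mader's theorem (every graph has an adjacent pair $x,y$ with $\lambda(x,y)=\min(\deg x,\deg y)$) would give a pair with $\lambda\ge q+1$, a contradiction. So there is always a vertex $v$ of degree $\le q$, and more sharply an edge $xy$ with $\lambda(x,y)=\min(\deg x,\deg y)\le q$. Deleting a vertex of degree $\le q$ removes at most $q$ edges and one vertex, which costs too much compared with the budget of $(q+1)/2$ per vertex. I therefore intend to use Mader's degree-reduction result instead: in a minimally $q$-local graph, the average degree is at most about $q+1$ with the correct constant, giving $|E|\le (q+1)(m-1)/2$. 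This is the known extremal bound of Mader (and Bollob\'as) for graphs with no pair of $(q+1)$ edge-disjoint paths. I expect this to be the main obstacle, namely obtaining exactly the $-1$ term. The remedy I would try first is to combine the cut induction (for $k\ge (q+1)/2$) with deletion of a low-degree vertex $v$ of degree $d\le (q+1)/2$ (for small $k$, where some vertex on the small side has low degree by counting), each step staying within budget. The equality cases of Corollary~\ref{MC-11} serve as a sanity check that the constant is right.
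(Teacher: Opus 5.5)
Your reductions are correct: adding bridges between components, passing to blocks, and using $\sum_i(n_i-1)=n-1$. You are also right that under $EP(G)\le q(J_n-I_n)$ a block need not be a $q'$-cactus for any $q'$ (e.g.\ $K_4-e$). The paper passes over this point when it says the corollary is implied by Theorem~\ref{MTH-1011}. The gap is that you never prove the single-block bound in the case where it is hard.

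Contracting one side of a global minimum cut does not raise $\lambda$ between vertices of the other side (this is Gomory--Hu uncrossing). However, the induction hypothesis you then apply to $B_1,B_2$ is false for loopless multigraphs. Two vertices joined by $q$ parallel edges already violate $|E|\le (q+1)(m-1)/2$ for $q\ge 2$, and, as the paper's own remark notes, multigraphs with $EP=q(J-I)$ can have $q(m-1)$ edges. If you bound $B_1$ through the simple graph $B[X]$ instead, you get only $|E(B_1)|\le \frac{q+1}{2}(|X|-1)+k$. That is within budget exactly when $k\le (q+1)/2$, which is the opposite of the regime in which you contract.

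Moreover, if every minimum cut is trivial, contraction gives $B_2\cong B$ and no progress. Deleting a vertex of degree $k>(q+1)/2$ instead overshoots the budget. This happens on the extremal graphs themselves. Take the wheel $W_5$ (the super $3$-wheel on $m=6$ vertices, with $10=(q+1)(m-1)/2$ edges). Its only minimum cuts are the rim-vertex stars, of size $3>(q+1)/2$, and deleting a rim vertex gives only $|E|\le 8+3=11$. Your final fallback, Mader's extremal theorem for graphs with no two vertices joined by $q+1$ edge-disjoint paths, is the corollary itself. Appealing to it is circular, and if citing it is allowed, the rest of the proposal is unnecessary.

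The case you flagged as problematic is actually the easy one. If $k\le (q+1)/2$, delete the cut edges instead of contracting. Then $B[X]$ and $B[\overline X]$ are simple graphs with all local edge-connectivities at most $q$, so induction on simple graphs gives $|E(B)|\le \frac{q+1}{2}(|X|-1)+\frac{q+1}{2}(|\overline X|-1)+k\le \frac{q+1}{2}(m-1)$.

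What is genuinely missing is an argument for blocks whose edge-connectivity exceeds $(q+1)/2$. This is where the paper spends its effort: Theorem~\ref{MTH-101}, built on the circulant Lemmas~\ref{ML-1001} and~\ref{ML-1002} and the super $q$-wheels, though stated only for $q$-cacti. In your framework this case needs a new ingredient. One option is an induction statement for a class of multigraphs that is closed under your contraction and on which the bound actually holds. Another is a quantitative argument that the $k$ cut edges force the smaller pieces to be far enough from extremal to absorb the excess $k-(q+1)/2$.
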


\begin{corollary}
Let $G$ be a simple graph of order $n$ and $EP(G) < q(J_n-I_n)$. Then $$ |E(G)|< (q+1)\frac{n-1}{2}. 
$$
\end{corollary}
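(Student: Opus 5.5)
The plan is to reduce to the preceding corollary (the bound under $EP(G)\le q(J_n-I_n)$) and then show that equality cannot occur when the hypothesis is strict. I read $EP(G)<q(J_n-I_n)$ entrywise on the off-diagonal entries. Each off-diagonal entry of $EP(G)$ is an integer, so every entry is at most $q-1$. Hence $EP(G)\le (q-1)(J_n-I_n)$.

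Applying the preceding corollary with $q-1$ in place of $q$ gives
\[
|E(G)|\le q\,\frac{n-1}{2}<(q+1)\frac{n-1}{2},
\]
where the strict inequality needs only $n\ge 2$. For $n=1$ the hypothesis is vacuous, both sides are $0$, and the strict statement must be read with $n\ge 2$; I will state this explicitly.

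If instead the hypothesis is meant as ``$EP(G)\le q(J_n-I_n)$ with at least one off-diagonal entry strictly smaller'', the integer-rounding trick fails, and I would argue through blocks. Take a pair $x,y$ with $p_{xy}\le q-1$. I would like to show that some block of $G$, or the fact that $G$ is disconnected, forces a strict loss in the sum $\sum_i (q+1)(n_i-1)/2$ used in the proof of Theorem~\ref{MTH-1011}.

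The key steps in that version are as follows. If $G$ is disconnected, the block/component count gives $\sum (n_i-1)\le n-2$, which yields strict inequality directly. If $G$ is connected, the edge-disjoint path count between two vertices of a block $B$ is computed within $B$. So if every block $B_i$ had $EP(B_i)=q(J-I)$, then by Lemma~\ref{ML-0} all entries of $EP(G)$ would equal $q$, a contradiction. Thus some block $B$ has all its $EP$ entries at most $q$ with one strictly smaller.

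It would then suffice to prove the strict block bound $|E(B)|<(q+1)(|B|-1)/2$ for such a block. This is the main obstacle. I would try to extend the deletion-counting argument of Theorem~\ref{MTH-101}, namely that at least $(n-1)(n-1-q)/2$ edges must be removed from $K_n$, by showing that the extremal super $q$-wheels are the only equality cases. Alternatively, I would bound $|E(B)|$ by $q'(|B|-1)/2$ for the largest $q'<q$ attained.

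Given the integrality observation, however, I expect the intended proof to be the short reduction above, and I would present that as the proof.
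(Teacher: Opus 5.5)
The paper states this corollary with no proof, as an immediate consequence of the preceding one. Your integrality reduction (the off-diagonal entries are integers $<q$, so $EP(G)\le (q-1)(J_n-I_n)$, so $|E(G)|\le q(n-1)/2<(q+1)(n-1)/2$) is exactly the short argument that makes it one, and your caveat that $n\ge 2$ is needed is correct and missing from the paper. The unfinished block-by-block discussion of the ``$\le$ but $\ne$'' reading is not needed and can be dropped.
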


An {\it edge-cut} of a graph \( G \) is a subset of edges of the form \( [S, V(G) \setminus S] \), where \( S \) is a nonempty proper subset of \( V(G) \). The {\it edge-connectivity} of \( G \), denoted \( \kappa'(G) \), is the minimum size of an edge-cut in \( G \). A graph \( G \) is \( k \)-{\it edge-connected} if \( \kappa'(G) \geq k \). Furthermore, \( G \) is called {\it minimally} \( k \)-{\it edge-connected} if it is \( k \)-edge-connected, but for every edge \( e \in E(G) \), the graph \( G - e \) is not \( k \)-edge-connected.

\begin{remark}
If \(EP(G) = q(J_n - I_n)\), then obviously \(G\) is minimally \(q\)-edge-connected. Mader~\cite{Mader} proved that a minimally \(q\)-edge-connected simple graph has size at most \(q(n - q)\) when \(n \geq 3q - 2\). For the simple graphs with the edge-path matrix \(q(J_n - I_n)\), the difference between Mader's bound and the bound in Theorem~\ref{MTH-1011} is  given by
\[
q(n - q) - (q+1)\frac{n-1}{2} = \frac{1}{2}(n - 2q - 1)(q - 1).
\]
\end{remark}

\begin{remark}
A minimally \(q\)-edge-connected multigraph (which may contain multiple edges) on \(n\) vertices has at most \(q(n - 1)\) edges, and this bound is best possible for all \(n\) and \(q\)~\cite{Mader}. In a multigraph whose edge-path matrix is \(q(J - I)\), every vertex-minimal block must consist of exactly \(q\) parallel edges. Since there are at most \(n - 1\) such blocks and the total number of edges increases linearly with the number of blocks, it follows that any multigraph with edge-path matrix equal to \(q(J_n - I_n)\) has at most \(q(n - 1)\) edges. This bound is also best possible among all such multigraphs.
\end{remark}

\section{Eulerian graphs and their edge-path matrices }

In this section, we verify Conjecture~\ref{C-2} by first proving a result concerning vertices of odd degree in a simple graph.
\begin{lemma}\label{ML-2222}
Let \(v\) be an odd degree vertex in a simple graph \(G\). Then there is an odd number of edge‑disjoint paths between \(v\) and at least one of its neighbours in \(G\).
\end{lemma}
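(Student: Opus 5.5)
We argue by contradiction through a Gomory--Hu tree of \(G\), combined with the parity identity
\[
|\partial X|\equiv \sum_{x\in X}\deg(x)\pmod 2 \qquad (X\subseteq V(G)).
\]
Let \(T\) be a Gomory--Hu tree of \(G\), rooted at \(v\). Each tree edge \(e=ab\) carries the weight \(w(e)=\lambda(a,b)\), which is the maximum number of edge-disjoint \(a\)--\(b\) paths. If \(A_e\) is the component of \(T-e\) not containing \(v\), then \(w(e)=|\partial A_e|\). Moreover, \(\lambda(v,x)\) is the minimum weight on the \(v\)--\(x\) path of \(T\). By Menger's theorem, this is the same quantity as the entries of \(EP(G)\). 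Assume, for contradiction, that \(\lambda(v,u)\) is even for every neighbour \(u\) of \(v\).

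\textbf{Step 1 (an odd tree edge exists).} Since \(\deg(v)\) is odd and the total degree is even, \(\sum_{x\ne v}\deg(x)\) is odd. The subtrees \(A_1,\dots,A_k\) hanging from the children \(c_1,\dots,c_k\) of \(v\) partition \(V(G)\setminus\{v\}\), so one of them, say \(A_1\), has an odd degree sum. Hence \(w(vc_1)=|\partial A_1|\) is odd, and more generally every tree edge whose lower subtree has an odd degree sum has odd weight.

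\textbf{Step 2 (descend to a minimal odd set containing a neighbour).} Among all tree edges \(e\) with \(w(e)\) odd, choose one whose lower subtree \(A_e\) is inclusion-minimal. Since \(w(e)\) is odd, \(\sum_{A_e}\deg\) is odd. By minimality, every child subtree \(A'\subsetneq A_e\) has even weight, and hence an even degree sum. Writing \(A_e=\{c\}\cup A'_1\cup\cdots\), we get that \(\deg(c)\) is odd, where \(c\) is the top vertex of \(A_e\).

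The plan is then to show that the minimal-weight edge on the \(v\)--\(c\) path may be taken to be \(e\) itself, and that some neighbour \(u\) of \(v\) lies in \(A_e\) with \(\lambda(v,u)=w(e)\). If \(v\) had no neighbour in \(A_e\), then \(\partial A_e\) would avoid \(v\). In that case I would like to exchange \(A_e\) with a smaller odd cut, contradicting minimality. Here I would use the uncrossing property of minimum cuts, namely submodularity \(|\partial(X\cap Y)|+|\partial(X\cup Y)|\le|\partial X|+|\partial Y|\), together with the parity identity.

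\textbf{Main obstacle.} Step 2 is the hard step, because \(\lambda(v,u)\) is a minimum along a tree path rather than a single cut, so parity does not pass through the minimum automatically. To handle it, I would refine the choice of \(A_e\): take, among all sets \(X\subseteq V(G)\setminus\{v\}\) with \(|\partial X|\) odd, one with \(|\partial X|\) minimum, and then \(X\) inclusion-minimal. I would then show two things.

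(i) \(X\) contains a neighbour \(u\) of \(v\). If not, a vertex of odd degree in \(X\), or an odd piece of \(X\) obtained by uncrossing with a minimum cut, gives a smaller odd cut.

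(ii) \(\lambda(v,u)=|\partial X|\). Any smaller \(v\)--\(u\) cut \(Y\) has even size. Uncrossing \(Y\) with \(X\) via submodularity and parity then produces an odd cut strictly smaller than \(X\) or strictly inside \(X\), a contradiction.

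Conclusion (ii) gives a neighbour \(u\) with \(\lambda(v,u)\) odd, which proves Lemma~\ref{ML-2222}.
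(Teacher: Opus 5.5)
\textbf{The gap.} Step~2 and claims (i)--(ii) cannot be completed. They try to pass from an odd value \(\lambda(v,c)\) for \emph{some} vertex \(c\) to an odd value \(\lambda(v,u)\) for a \emph{graph neighbour} \(u\) of \(v\). No uncrossing argument can supply this, because the lemma as stated is false.

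Take \(G=K_{2,3}\) with parts \(\{v,r\}\) and \(\{a,b,c\}\). Then \(\deg(v)=3\) and every neighbour of \(v\) has degree \(2\), so \(\lambda(v,a)\le 2\). The paths \(va\) and \(vbra\) are edge-disjoint, hence \(\lambda(v,a)=\lambda(v,b)=\lambda(v,c)=2\), all even.

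Your claim (i) fails on exactly this example. Since \(|\partial X|\equiv\sum_{x\in X}\deg(x)\pmod 2\), a set \(X\subseteq V(G)\setminus\{v\}\) has odd boundary if and only if \(r\in X\). Every such set has \(|\partial X|=3\). The inclusion-minimal one is \(X=\{r\}\), which contains no neighbour of \(v\) and admits no smaller odd cut to contradict.

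The paper's inclusion--exclusion proof breaks on the same example. It asserts that when \(\lambda(v,u_i)=s+1\), exactly \(s\) of the pairs \(A_j\) contain \(u_i\). This ignores the closed trails built for the other neighbours, which also pass through \(u_i\). In \(K_{2,3}\) the pairs are, e.g., \(\{a,b\},\{b,c\},\{c,a\}\). Each neighbour lies in two of them rather than \(s=1\), and the union has odd size~\(3\).

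\textbf{What survives.} Your Step~1 is correct. For the child \(c_1\) of \(v\) in the Gomory--Hu tree whose subtree has odd degree sum, \(\lambda(v,c_1)=w(vc_1)=|\partial A_1|\) is odd. In \(K_{2,3}\) this gives \(c_1=r\) with \(\lambda(v,r)=3\).

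This weaker statement---an odd entry in row \(v\) of \(EP(G)\), at a vertex that need not be adjacent to \(v\)---is all that the sufficiency half of Theorem~\ref{MT-44} uses. So restate the lemma in that form and discard Step~2.
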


\begin{proof}
Let $\deg(v) = 2k-1$ for some positive integer $k$.  
If $k=1$, then $v$ has exactly one neighbor, and the edge joining $v$ to this neighbor is the only edge‑disjoint path between them.  

Now assume $k \ge 2$ and let the neighbors of $v$ be $N(v) = \{u_1, \dots, u_{2k-1}\}$.

We argue by contradiction. Suppose, contrary to the statement, that for every neighbor $u_i$ there is an even number of edge‑disjoint paths between $v$ and $u_i$. For each $u_i$, the collection of edge‑disjoint $v$–$u_i$ paths together with the edge $vu_i$ forms an odd number of closed trails. Moreover, every such closed trail contains exactly two neighbors of $v$, one of which is $u_i$.  

Let $A_1, \dots, A_m$ be the family of all $2$-element subsets $\{u_i, u_j\}$ that correspond to the pairs of neighbors appearing together in one of the closed trails described above. Applying the inclusion–exclusion principle, we obtain

\begin{eqnarray*}
2k-1 &=& |A_1\cup\cdots\cup A_m| \\
&=&\sum_{1\leq i_1\leq m} |A_{i_1}|
   - \sum_{1\leq i_1<i_2\leq m} |A_{i_1}\cap A_{i_2}|
   + \cdots \\
&&+ (-1)^{s-1} \sum_{1\leq i_1<\cdots<i_s\leq m} |A_{i_1}\cap\cdots\cap A_{i_s}|
   + \cdots
   + (-1)^{m-1}|A_{1}\cap\cdots\cap A_{m}|.
\end{eqnarray*}

Consider the parity of the right-hand side. The first term is clearly even, so it does not affect the parity. Similarly, any intersection that is empty or contains two elements also leaves the parity unchanged. Hence we only need to examine those intersections that consist of exactly one vertex.

If there are \(s+1\) edge-disjoint paths between \(v\) and \(u_i\), then exactly \(s\) of the sets \(A_1,\dots,A_m\) contain \(u_i\), and these \(s\) sets have intersection \(\{u_i\}\). Consequently, for such a vertex $u_i$, the contribution to the second term is $\binom{s}{2}$, to the third term $\binom{s}{3}$, $\dots$, to the $(s-1)$-th term $\binom{s}{s-1}$, to the $s$-th term $1$, and to all subsequent terms $0$. Since $s$ is odd, we have

\[
-\binom{s}{2} + \cdots + (-1)^{\frac{s-1}{2}-1}\binom{s}{\frac{s-1}{2}} +(-1)^{\frac{s+1}{2}-1}\binom{s}{\frac{s+1}{2}}+ \cdots +\binom{s}{s-2}- \binom{s}{s-1} + \binom{s}{s} \;=\; -s + 1.
\]

Thus the total contribution to the right-hand side from all vertices is even. This forces $2k-1$ to be even—a contradiction.

\end{proof}

We then recall Euler's characterization of Eulerian graphs.

\begin{theorem}[\cite{Euler}]\label{TH-14}
A connected graph $G$ is Eulerian if and only if every vertex of $G$ has even degree.
\end{theorem}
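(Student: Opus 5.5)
We prove the two directions separately, in the standard way. We use the convention that a connected graph is Eulerian if it has a closed trail using every edge exactly once; the trivial one-vertex graph counts as Eulerian.

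\emph{Necessity.} Suppose $G$ has an Eulerian circuit $W$, and traverse $W$ starting and ending at some vertex $w$. Each time $W$ passes through a vertex $x$ in the interior of the walk, it uses one edge to enter $x$ and a different edge to leave. The first and last edges at $w$ also pair up. Every edge occurs in $W$ exactly once, so the edges incident to each vertex are partitioned into pairs. Hence every degree is even.

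\emph{Sufficiency.} We argue by strong induction on $|E(G)|$. If $|E(G)|=0$, then $G$ is a single vertex and the empty trail suffices. Now let $G$ be connected with at least one edge and all degrees even. Then every vertex has degree at least $2$, since connectivity together with the presence of an edge rules out isolated vertices.
\begin{enumerate}
\item \textbf{Find a cycle.} Take a longest path $P=x_0x_1\cdots x_m$ in $G$. Since $\deg(x_0)\ge 2$, the vertex $x_0$ has a neighbour other than $x_1$. By maximality of $P$, that neighbour lies on $P$, and this closes a cycle $C$.
\item \textbf{Delete the cycle.} Let $G'=G-E(C)$. Each vertex of $C$ loses exactly $2$ from its degree, and all other degrees are unchanged, so every degree in $G'$ is even. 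Each component $H_1,\dots,H_r$ of $G'$ has fewer edges than $G$. By the induction hypothesis, each $H_j$ has an Eulerian circuit $W_j$, which is empty if $H_j$ is an isolated vertex.
\item \textbf{Splice.} Because $G$ is connected, every component $H_j$ meets $V(C)$. Choose a vertex $y_j\in V(H_j)\cap V(C)$ for each $j$. Traverse $C$, and on first arriving at $y_j$ insert the closed trail $W_j$, rerouted to start and end at $y_j$. The resulting closed walk uses each edge of $C$ and of every $H_j$ exactly once. It is therefore an Eulerian circuit of $G$.
\end{enumerate}

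The main point needing care is the splicing step. It requires that every component of $G'$ actually touches $C$. This holds because any edge path in $G$ from a component $H_j$ to $C$ must leave $H_j$, and the only edges not lying inside a component of $G'$ are the edges of $C$. The splicing also requires that the $y_j$ are inserted at distinct occurrences along $C$, which keeps the combined walk a trail. Both points are routine once stated, and with them the induction closes.
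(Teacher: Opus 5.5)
Your proof is correct. It is the standard textbook argument: necessity by pairing the edge-ends at each vertex along the circuit, and sufficiency by extracting a cycle and splicing in circuits of the components, by induction on $|E(G)|$. The paper gives no proof of this statement. It quotes Euler's theorem as a classical result and uses it in the proof of Theorem~\ref{MT-44}, so there is no competing argument to compare against.

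Two small remarks.

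First, the step ``$x_0$ has a neighbour other than $x_1$'' uses simplicity. That is fine here, since the paper applies the theorem only to simple graphs. For multigraphs, take instead a second edge at $x_0$. By maximality of $P$ its other end lies on $P$, which closes a cycle, possibly a loop or a pair of parallel edges. Deleting that cycle still lowers each of its vertices' degrees by exactly $2$, so the induction runs unchanged.

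Second, the spliced walk is a trail because $E(C),E(H_1),\dots,E(H_r)$ partition $E(G)$, so no edge is repeated. Distinct insertion points are not what guarantees this. They do hold automatically, since the $H_j$ are vertex-disjoint, but inserting several of the $W_j$ at the same vertex would also yield a trail.
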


We are now ready to prove Conjecture~\ref{C-2}.

\begin{theorem}\label{MT-44}
A simple graph is Eulerian if and only if every entry of its edge-path matrix is even.
\end{theorem}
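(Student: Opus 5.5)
We prove the two directions separately. Throughout, "Eulerian" means connected with a closed trail through every edge. As in the conjecture, we read the even-entry condition as applying to connected graphs: a disconnected graph has zero entries, which are even, so connectivity must be assumed on both sides. By Theorem~\ref{TH-14}, it then suffices to show that a connected simple graph has all vertex degrees even if and only if every off-diagonal entry of $EP(G)$ is even.

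\emph{Necessity.} Let $G$ be connected with all degrees even, and fix distinct $u,v$. By Menger's theorem, the entry $p_{uv}$ equals the minimum size of an edge cut $\partial X$ with $u\in X$ and $v\notin X$. For every $X\subseteq V(G)$ we have
\[
|\partial X| = \sum_{x\in X}\deg(x) - 2|E(G[X])|,
\]
which is even since every degree is even. Hence every $u$--$v$ edge cut has even size, so the minimum $p_{uv}$ is even. This direction is routine.

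\emph{Sufficiency.} Suppose every off-diagonal entry of $EP(G)$ is even and $G$ is connected. If some vertex $v$ had odd degree, Lemma~\ref{ML-2222} would give a neighbour $u$ of $v$ with $p_{uv}$ odd, a contradiction. So all degrees are even, and Theorem~\ref{TH-14} shows that $G$ is Eulerian.

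The main obstacle is that this direction rests entirely on Lemma~\ref{ML-2222}, whose inclusion--exclusion argument is delicate. If I wanted a self-contained check, I would use cut parity instead. An odd-degree vertex $v$ forces an odd vertex $w$ in the same component, since the number of odd-degree vertices in a component is even. Take a Gomory--Hu tree of $G$, which exists for undirected graphs: each $p_{xy}$ is the minimum edge weight on the tree path from $x$ to $y$, and each tree edge weight equals both $p_{ab}$ for its endpoints and the size $|\partial X|$ of its fundamental cut $X$. If all $p$-values are even, every fundamental cut is even. Because $|\partial\{x\}|=\deg(x)$ can be written as a mod-$2$ sum of fundamental cut sizes, every degree is even, contradicting the existence of $v$.

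I expect this Gomory--Hu parity step to be the one needing care. Concretely, the singleton $\{x\}$ is the symmetric difference of the fundamental cuts of the tree edges incident to $x$. Hence $\partial\{x\}$ is the symmetric difference of those fundamental edge cuts, and $|A\triangle B|\equiv|A|+|B|\pmod 2$ gives the parity of $\deg(x)$.
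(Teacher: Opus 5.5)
Your necessity direction is the paper's cut-parity argument, done more carefully. The paper obtains an odd cut by ``choosing one edge from each path,'' which need not give a cut at all. You instead take a minimum $u$--$v$ cut via Menger and use $|\partial X|=\sum_{x\in X}\deg(x)-2|E(G[X])|$. Your connectivity caveat is also correct: $2K_3$ has all entries even but is not Eulerian, and the paper uses connectivity silently when it invokes Theorem~\ref{TH-14}.

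\textbf{The gap.} The sufficiency argument you lead with is exactly the paper's, and it fails because Lemma~\ref{ML-2222} is false. Take $K_{2,3}$ with parts $\{v,x\}$ and $\{a,b,c\}$. The vertex $v$ has degree $3$, yet each of its neighbours has degree $2$ and the graph is $2$-edge-connected. Hence $p_{va}=p_{vb}=p_{vc}=2$. The only odd entry in the row of $v$ is $p_{vx}=3$, and $x$ is not adjacent to $v$. In the paper's inclusion--exclusion, the count ``exactly $s$ of the sets $A_j$ contain $u_i$'' ignores that $u_i$ can also lie on trails built for other neighbours. So the step ``Lemma~\ref{ML-2222} would give a neighbour $u$ of $v$ with $p_{uv}$ odd'' cannot stand, and that version of your proof inherits the paper's gap.

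\textbf{The fix.} Your Gomory--Hu fallback is correct, and it should replace the lemma-based argument rather than serve as an optional check. It is a genuinely different route from the paper's, and unlike the paper's, it is sound. Each tree edge $e=ab$ has weight $w(e)=p_{ab}$, which is even by hypothesis, and $w(e)=|\partial Y_e|$ for the shores $Y_e$ of its fundamental cut. For the tree edges $e$ at $x$, take the shores $Y_e$ not containing $x$; these partition $V\setminus\{x\}$. Hence $\partial\{x\}=\partial(V\setminus\{x\})$ is the symmetric difference of the cuts $\partial Y_e$, and $\deg(x)\equiv\sum_e w(e)\equiv 0 \pmod 2$. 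Strictly, it is $V\setminus\{x\}$, not $\{x\}$, that these shores decompose, but the edge cut is the same, so your conclusion holds.

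Read contrapositively, the argument produces an odd entry $p_{xa}$ where $a$ is a neighbour of $x$ in the Gomory--Hu tree, not necessarily in $G$. That is precisely what the $K_{2,3}$ example forces. The sentence about a second odd vertex $w$ plays no role and can be dropped.
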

\begin{proof}
Assume, for the sake of contradiction, that the edge-path matrix \(EP(G)\) of an Eulerian graph \(G\) contains an odd entry. This means that between the corresponding vertices \(v\) and \(u\), there exist an odd number of edge-disjoint paths. By choosing exactly one edge from each such \(v\)-\(u\) path, we obtain an edge-cut of \(G\) with odd cardinality.

Removing this edge-cut separates \(G\) into two disjoint subgraphs \(G_1\) and \(G_2\), where \(v \in G_1\) and \(u \in G_2\). Because \(G\) is Eulerian, every vertex has even degree. However, an edge-cut of odd size forces each of \(G_1\) and \(G_2\) to contain an odd number of odd-degree vertices—a contradiction, as no graph can have an odd number of odd-degree vertices. 

We prove sufficiency, also by contradiction. Suppose \(G\) is not Eulerian. Then, by Theorem~\ref{TH-14}, \(G\) contains a vertex \(v\) of odd degree. By Lemma~\ref{ML-2222}, there exists a neighbor \(u\) of \(v\) such that there are an odd number of edge-disjoint paths between \(v\) and \(u\). Hence, the entry in the edge-path matrix \(EP(G)\) corresponding to the pair \((v,u)\) is odd.
\end{proof}

The edge-connectivity between two vertices \( v \) and \( u \) is defined as the minimum size of an edge-cut \([S, V(G) \setminus S]\) satisfying \( v \in S \) and \( u \in V(G) \setminus S \). It is well known that the edge connectivity of a vertex pair equals the maximum number of edge-disjoint paths between them. Consequently, Theorem~\ref{MT-44} can be equivalently stated as:
\vskip0.2cm
{\it A simple graph is Eulerian if and only if the edge connectivity between every pair of vertices is even.}
\vskip0.2cm

\section*{Declaration of competing interest}
We declare that we have no financial and personal relationships with other people or organizations that can inappropriately influence our work.

\section*{Acknowledgements}
This work is supported by National Natural Science Foundation of China (Grant Number: 11961070) and Natural Science Foundation of Xinjiang (Grant Number: 2024D01A89).

\section*{Data availability}
No data was used for the research described in the article.



\

\end{document}